\documentclass[11pt,a4paper]{amsart}
\usepackage{latexsym}
\usepackage{graphicx}
\usepackage{subfig}
\usepackage{caption}
\usepackage{float}
\usepackage{enumerate}
\usepackage[top=3.2cm,bottom=3.8cm,left=3cm,right=2cm]{geometry}
\usepackage{mathrsfs}
\usepackage{amssymb}
\usepackage{amsbsy}
\usepackage{xcolor}
\usepackage[normalem]{ulem}
\usepackage{cancel}
\usepackage[colorlinks,linkcolor=blue,citecolor=blue,pagebackref]{hyperref}

\renewcommand {\thefootnote}{\fnsymbol{footnote}}

\newtheorem{theorem}{Theorem}[section]
\newtheorem{lemma}[theorem]{Lemma}

\newtheorem*{logBMconjecture}{Log Brunn--Minkowski conjecture}
\theoremstyle{definition}

\theoremstyle{remark}
\newtheorem{remark}[theorem]{Remark}

\numberwithin{equation}{section}
\newcommand{\R}{\mathbb{R}}

\newcommand\nnfootnote[1]{%
	\begin{NoHyper}
		\renewcommand\thefootnote{}\footnote{#1}%
		\addtocounter{footnote}{-1}%
	\end{NoHyper}
}

\begin{document}
	\begin{center}
		{\large\bf The uniqueness and non-uniqueness of solutions to the even \\dual Minkowski problem}
	\end{center}
	\vskip 15pt
	\begin{center}
		{\small\bf  Ge\ Xiong\ \  \ \ \ \ \ \ \ Kai-Wen\ Yang}\\~~ \\
		\small{School of Mathematical Sciences, Tongji University, Shanghai, 200092, P. R. China}
	\end{center}
	
	\vskip 5pt
	\nnfootnote{E-mail addresses: 1. xiongge@tongji.edu.cn;\ 2. yangkaiwen@tongji.edu.cn.}
	\nnfootnote{Research of the authors was supported by NSFC No. 12271407.}
	
	\begin{center}
		\begin{minipage}{14cm}
			{{\bf Abstract:} For solutions to the Minkowski problem of the even dual curvature measures $\widetilde{C}_q$ in  \(\R^n\), we prove the non-uniqueness  for \(q>n\)  and $n\ge 2$,  and the uniqueness for \(0<q<n\) and \(n=2\). These results are governed  by our established  logarithmic Brunn-Minkowski inequality for dual quermassintegrals $\widetilde V_q$.} 
			
			\vskip 5pt{{\bf 2020 Mathematics Subject Classification:} 52A38, 35J96.}
			
			\vskip 5pt{{\bf Keywords:} Convex body,   Minkowski problem, log Brunn-Minkowski inequality}
		\end{minipage}
	\end{center}
	
	\vskip 20pt
	\section{\bf Introduction}
	\vskip 5pt
	
	The setting for this paper is the $n$-dimensional Euclidean space $\mathbb{R}^n$, $n\ge 2$.   A \emph{convex body} in $\mathbb{R}^n$ is a compact convex set with non-empty interior. A convex body \(K\) is  \emph{origin-symmetric} if \(K=-K\). In the 1970s, Lutwak \cite{lutwak1} introduced the notion of \emph{dual mixed volume}, which is dual to the classical mixed volume in convex geometry.
	Let \( K_1, \ldots, K_n\) be convex bodies  in \(\mathbb{R}^n \) containing the origin in their interiors, their dual mixed volume is
	\[
	\widetilde{V}(K_1, \ldots, K_n) = \frac{1}{n} \int_{\mathbb{S}^{n-1}} \rho_{K_1}(u) \cdots \rho_{K_n}(u)\, du,
	\]
	where \(\mathbb{S}^{n-1}\) is the \((n-1)\)-dimensional Euclidean unit sphere, and \(\rho_K(x) = \sup\{\lambda \geq 0 : \lambda x \in K\}\), \(x \in \mathbb{R}^n \backslash\{0\}\), is the \emph{radial function} of convex body \( K \).
	
	If \( K_1= \ldots= K_j = K \) and \( K_{j+1}= \ldots= K_n = B^n \), where \(B^n\) is the Euclidean unit ball in \(\R^n\), then 
	\[
	\widetilde{V}(\underbrace{K, \ldots, K}_j, \underbrace{B^n, \ldots, B^n}_{n-j}) = \frac{1}{n} \int_{\mathbb{S}^{n-1}} \rho_K^j(u)\, du, \quad j = 0, 1, \ldots, n,\]
	is called the \emph{\( j \)-th dual quermassintegral} of \( K \) and denoted by $\widetilde{V}_j(K)$. 
	Indeed, the index $j$ involved in $\widetilde{V}_j(K)$ can be extended to $\mathbb{R}$. So, 
	the \( q \)-th dual quermassintegral of  \( K \) is 
	\[
	\widetilde{V}_q(K) = \frac{1}{n} \int_{\mathbb{S}^{n-1}} \rho_K^q(u) \, du, \quad q\in \mathbb{R}.
	\]
	
	In 2016, Huang, Lutwak, Yang and Zhang \cite{huang} defined the geometric measures associated with dual quermassintegrals, called \emph{dual curvature measures}.
	Let $K$ be a convex body in \(\mathbb{R}^n \) with the origin in its interior and $q \in \mathbb{R}$. The $q$-th dual curvature measure $\widetilde{C}_q(K, \cdot)$ is defined by
	\[
	\widetilde{C}_q(K, \eta) = \frac{1}{n} \int_{\alpha_K^*(\eta)} \rho_K^q(u)du, \ \text{for each Borel } \eta \subset \mathbb{S}^{n-1},
	\]
	where $\alpha_K^*(\eta)$ denotes the set of directions $u\in \mathbb{S}^{n-1}$ such that $\rho_K(u)u$ belongs to the \emph{reverse Gauss image} of $\eta$,  $\nu^{-1}_K(\eta)$. Specifically, 
	$\widetilde{C}_0(K, \cdot)$ is  the \emph{radial angle measure} of $K$ and  $\widetilde{C}_n(K, \cdot)$ is the  \emph{cone-volume measure} $V_K$ of $K$.
	
	If, in addition,  $K\in C^2_+$, i.e., $K$ has a $C^2$ boundary with everywhere positive curvature, then
	\[
	\widetilde{C}_q(K, \eta) = \frac{1}{n} \int_{\eta} h_K(u) |\nabla h_K(u)|^{q-n} \det(h_{ij}(u) + h_K(u) \delta_{ij}) \, du,
	\]
	where $h_K(x)=\max\{x \cdot y: y \in K\}$, $x\in \R^n$, is the \emph{support function} of \(K\) and $(h_{ij})$ denotes the Hessian matrix of $h_K$ with respect to an orthonormal frame on $\mathbb{S}^{n-1}$. 
	Therefore, $\widetilde{C}_q(K, \cdot)$ is absolutely continuous with respect to spherical Lebesgue measure with continuous density.
	
    \vskip3pt
	The \emph{dual Minkowski problem} originally posed in \cite{huang} asks: Given a finite Borel measure $\mu$ on $\mathbb{S}^{n-1}$ and $q\in \mathbb{R}$, what are necessary and sufficient conditions for the existence of a convex body $K\subset \mathbb{R}^n$ with the origin in its interior that solves the measure equation
	$\widetilde{C}_q(K, \cdot)=\mu.$
	\vskip3pt
	
	The existence and uniqueness of solutions to the dual Minkowski problem were both solved for $q<0$ by Zhao \cite{zhao}, and up to dilation for $q=0$ by Aleksandrov \cite{AF}. For $0<q\le n$, only the existence of solutions to the problem in the symmetric case has been completely solved: the case \(0<q\le 1\) was proved in \cite{huang}; the case of integer \(q =2,\ldots ,n-1\) was proved by Zhao \cite{Zhao2018}; the case \(1<q<n\) was solved in \cite{huang} with a stronger sufficient condition and later completely solved by B\"or\"oczky, Lutwak, Yang, Zhang and Zhao \cite{B3} under the condition of B\"or\"oczky, Henk and Pollehn \cite{BC}; and the case \(q=n\) was solved by B\"or\"oczky, Lutwak, Yang and Zhang \cite{BL}.
	
	The uniqueness problem in the symmetric case for positive indices remains largely \emph{open}. For $q=n$, it is the even log-Minkowski problem. Stancu treated the discrete planar case \cite{Stancu1,Stancu2}, while B\"or\"oczky, Lutwak, Yang and Zhang determined the full planar case \cite{logBM}. In higher dimensions, Saroglou proved the unconditional case \cite{Saroglou2015}; B\"or\"oczky and Kalantzopoulos proved the case of convex bodies with $n$ independent hyperplane symmetries \cite{BKsym}. Kolesnikov and Milman proved local uniqueness near the Euclidean ball \cite{KM}, whereas Chen, Huang, Li and Liu proved global fixed-reference uniqueness when the reference body is $C^{2,\alpha}_+$ and Hausdorff-close to the ball \cite{CHLL}. Further global fixed-reference uniqueness results under curvature-pinching hypotheses were obtained by Milman \cite{MilmanCentro} and by Ivaki and Milman \cite{IvakiMilman}.
	
	For $0<q<n$, general uniqueness remains open. For constant density, Chen, Huang and Zhao \cite{CHZ} proved that the Euclidean ball is the unique smooth uniformly convex origin-symmetric solution. For even densities $C^\alpha$-close to $1$, Hu \cite{HuPositive} proved existence and uniqueness for $0<q\le n-1$ and $2\le n\le4$, and for $n-4\le q\le n-1$ and $n>4$; B\"or\"oczky, Chen, Liu and Saroglou \cite{BCLSnear} extended this to every $0<q<n$. Hu and Ivaki \cite{HuIvaki2026} proved uniqueness among $C^2_+$ unconditional solutions for every $0<q<n$.
	
	
	On the non-uniqueness of the even dual Minkowski problem, Huang and Jiang \cite{HuangJiang} proved the case where \(n=2\) and \(q\) is an even integer no less than \(6\). Chen, Chen and Li \cite{CCL} later extended the non-uniqueness result to the range \(q>2n\) for all \(n\ge 2\).
	
	The purpose of this article is to further refine the work on this topic. We prove the uniqueness of $\widetilde{C}_q$  for $0<q<2$ and $n=2$, and the non-uniqueness of $\widetilde{C}_q$ for $q>n$ and $n\ge 2$.
	\begin{theorem}\label{unique}
		Let \(K\) and \(L\) be  origin-symmetric convex bodies in \(\R^2\). If  \(\widetilde{C}_q(K,\cdot)=\widetilde{C}_q(L, \cdot)\) for \   \(0<q<2\), then \(K=L\).
	\end{theorem}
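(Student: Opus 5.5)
The plan follows the route of Böröczky--Lutwak--Yang--Zhang for the planar even log-Minkowski problem, with volume replaced by the dual quermassintegral $\widetilde V_q$. The abstract indicates that the key tool is a logarithmic Brunn--Minkowski inequality for $\widetilde V_q$. For origin-symmetric planar convex bodies $K,L$ and $0<q<2$, we will establish (or invoke) the inequality
\[
\widetilde V_q\big((1-\lambda)K+_0\lambda L\big)\ \ge\ \widetilde V_q(K)^{1-\lambda}\widetilde V_q(L)^{\lambda},\qquad \lambda\in[0,1],
\]
where $(1-\lambda)K+_0\lambda L$ is the Wulff shape of $h_K^{1-\lambda}h_L^{\lambda}$. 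We also need the equality characterization: equality forces $K=L$, not merely dilates, because $\widetilde V_q$ is not scale-compatible with the equality case of the ordinary log-BM inequality. Note that for $q<n=2$ we have $\widetilde V_q(tK)=t^q\widetilde V_q(K)$.

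The derivation of uniqueness from this inequality has three steps.

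\emph{Step 1: log-Minkowski variational inequality.} Using the variational formula of Huang--Lutwak--Yang--Zhang,
\[
\frac{d}{dt}\Big|_{t=0}\log\widetilde V_q([h_K e^{tf}]) = \frac{q}{\widetilde V_q(K)}\int f\,d\widetilde C_q(K,\cdot),
\]
we differentiate $\lambda\mapsto\log\widetilde V_q(K_\lambda)$ at $\lambda=0$, where $K_\lambda=[h_K^{1-\lambda}h_L^\lambda]$ and $f=\log(h_L/h_K)$. Concavity of $\lambda\mapsto\log\widetilde V_q(K_\lambda)$, which follows from the inequality applied to the family $K_\lambda$ because log-combinations are stable under reparametrization, yields
\[
\frac{1}{\widetilde V_q(K)}\int_{\mathbb S^1}\log\frac{h_L}{h_K}\,d\widetilde C_q(K,\cdot)\ \ge\ \frac1q\log\frac{\widetilde V_q(L)}{\widetilde V_q(K)},
\]
with equality only if $K=L$.

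\emph{Step 2: uniqueness.} Suppose $\widetilde C_q(K,\cdot)=\widetilde C_q(L,\cdot)$. Their total masses give $\widetilde V_q(K)=\widetilde V_q(L)$. Applying Step 1 with the roles of $K$ and $L$ swapped and adding the two inequalities gives
\[
\int\log\frac{h_L}{h_K}\,d\mu+\int\log\frac{h_K}{h_L}\,d\mu \ge 0
\]
with $\mu$ the common measure. The left side is $0$, so both inequalities are equalities, and the equality case gives $K=L$.

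\emph{Step 3, the main obstacle: the planar dual log-BM inequality with its equality case.} Equivalently, we need the log-Minkowski inequality of Step 1 for all symmetric $K,L$. We would prove it in the BLYZ manner. First reduce to polygons, or to smooth bodies, by approximation. Then take a minimizer $Q$ of $L\mapsto\int\log h_L\,d\widetilde C_q(K,\cdot)-\frac{\widetilde V_q(K)}{q}\log\widetilde V_q(L)$ over symmetric $L$. Its Euler--Lagrange equation says that $\widetilde C_q(Q,\cdot)$ is a multiple of $\widetilde C_q(K,\cdot)$. This reduces the inequality to showing that $K$ itself is the minimizer, i.e., a local uniqueness statement.

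Alternatively, and this is what we would try first, we would work locally in the smooth planar case. The second variation of $\log\widetilde V_q$ along $h_t=h e^{t f}$ for even $f$ becomes an ODE spectral problem on $\mathbb S^1$ involving $h$ and $\rho=\sqrt{h^2+h'^2}$. The needed concavity reduces to a Wirtinger/Poincaré-type inequality for even functions: the relevant operator has no even eigenfunctions beyond the first. The restriction $q<2$ should enter exactly here, since the factor $\rho^{q-2}$ has exponent $q-2<0$. The local estimate is then globalized via the continuity method or the Kolesnikov--Milman/Putterman local-to-global principle. Controlling this spectral inequality uniformly for all $0<q<2$, and keeping its equality case, is where we expect most of the effort.
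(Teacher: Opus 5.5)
Your Steps 1 and 2 follow the paper's route from a dual log-BM inequality to uniqueness: concavity of $t\mapsto\log\widetilde V_q([h_K^{1-t}h_L^t])$, the variational formula at $t=0$, then the resulting inequality applied to $(K,L)$ and to $(L,K)$. One small point: concavity needs monotonicity of $\widetilde V_q$, because in general only $[h_{K_s}^{1-\theta}h_{K_t}^{\theta}]\subset K_{(1-\theta)s+\theta t}$ holds, not equality.

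Your equality case is wrong. If $L=cK$, then $[h_K^{1-\lambda}h_L^\lambda]=c^\lambda K$, and by $q$-homogeneity $\widetilde V_q(c^\lambda K)=\widetilde V_q(K)^{1-\lambda}\widetilde V_q(L)^\lambda$. Likewise both sides of your Step 1 inequality equal $\log c$. So dilates always give equality, and the correct characterization is ``equality iff $K$ and $L$ are dilates.'' This is easily repaired: Step 2 then yields $L=cK$, and $\widetilde V_q(K)=|\mu|=\widetilde V_q(L)$ forces $c^q=1$, so $c=1$.

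The genuine gap is Step 3. The planar dual inequality with its equality case is the entire content of the theorem, and neither of your routes proves it.

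\textbf{Route (a) is circular.} For fixed $K$, the statement ``$K$ minimizes $Q\mapsto\int\log h_Q\,d\widetilde C_q(K,\cdot)-\frac{\widetilde V_q(K)}{q}\log\widetilde V_q(Q)$'' is a verbatim restatement of your Step 1 inequality. Via the Euler--Lagrange equation it is equivalent to the global uniqueness you are trying to prove; it is not a local statement.

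\textbf{Route (b) is an unexecuted program.} No spectral estimate is actually carried out. The Kolesnikov--Milman/Putterman local-to-global principle is a theorem about volume and would have to be re-proved for $\widetilde V_q$. Even then, it would not deliver the equality case for non-smooth bodies.

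\textbf{The missing idea: no new analysis is needed.} The dual inequality follows from the known planar log-BM inequality of B\"or\"oczky--Lutwak--Yang--Zhang.
\begin{enumerate}
\item For $0<q<2$, since $|x|^{q-2}=(2-q)\int_{|x|}^\infty r^{q-3}\,dr$, one has the layer-cake formula
\[
\widetilde V_q(A)=\frac{q(2-q)}{2}\int_0^\infty r^{q-3}\,V_2(A\cap rB^2)\,dr .
\]
This is exactly where $q<2$ enters: $|x|^{q-2}$ is then a positive superposition of indicators of discs. It does not enter through a spectral estimate.
\item Set $M=[h_K^{1-\lambda}h_L^\lambda]$. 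Then $[h_{K\cap aB^2}^{1-\lambda}h_{L\cap bB^2}^{\lambda}]\subset M\cap a^{1-\lambda}b^\lambda B^2$.
\item Applying planar log-BM to the symmetric truncations gives $G_M((1-\lambda)s+\lambda t)\ge G_K(s)^{1-\lambda}G_L(t)^\lambda$, where $G_A(s)=e^{(q-2)s}V_2(A\cap e^sB^2)$.
\item The one-dimensional Pr\'ekopa--Leindler inequality integrates this to the dual inequality.
\end{enumerate}

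\textbf{Equality case.}
\begin{enumerate}
\item The Pr\'ekopa--Leindler equality case gives a shift $\tau$ such that every pair $K\cap e^sB^2$, $L\cap e^{s+\tau}B^2$ is an equality case of planar log-BM.
\item For $e^s\in(r_-(K),r_+(K))$, the truncation $K\cap e^sB^2$ has a circular arc in its boundary, so it is not a parallelogram. Hence the pair must be dilates.
\item The volume identity $V_2(L\cap e^{s+\tau}B^2)=e^{2\tau}V_2(K\cap e^sB^2)$ fixes the dilation factor as $e^\tau$, which gives $L=e^\tau K$. The case where $K$ is a disc is handled directly.
\end{enumerate}
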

	
	\begin{theorem}\label{min}
		If \(q>n\ge 2\), then there exist origin-symmetric convex bodies \(K\) and \(L\) in \(\R^n\) such that
		$\widetilde{C}_q(K, \cdot)=\widetilde{C}_q(L, \cdot),$ while $K\neq L$.
	\end{theorem}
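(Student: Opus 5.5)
The plan is variational. Take $\mu=\widetilde C_q(B^n,\cdot)$, which is a constant multiple of spherical Lebesgue measure, so $L=B^n$ is one origin-symmetric solution. I will produce a second one that is not a dilate of the ball, and then fix the scale using homogeneity.

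On origin-symmetric convex bodies consider the dilation-invariant functional
\[
\Phi(K)=\frac1q\log\widetilde V_q(K)-\frac{1}{|\mu|}\int_{\mathbb S^{n-1}}\log h_K\,d\mu .
\]
By the variational formula of Huang--Lutwak--Yang--Zhang \cite{huang} for $\widetilde V_q$ under logarithmic perturbations of $h_K$, any minimizer of $\Phi$ satisfies $\widetilde C_q(K,\cdot)=\lambda\mu$ for some $\lambda>0$. Since $\widetilde C_q(tK,\cdot)=t^q\widetilde C_q(K,\cdot)$ and $q\neq 0$, replacing $K$ by $\lambda^{-1/q}K$ gives $\widetilde C_q(K,\cdot)=\mu$.

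\textbf{Step 1: existence of a minimizer.} Normalize, say by $\max h_K=1$. Consider a minimizing sequence that degenerates, i.e.\ $\min h_K\to0$ along some direction. Then $\widetilde V_q(K)\to0$, and for $q>n$ this happens at least as fast as a power $V(K)^{q/n}$, by H\"older's inequality $\widetilde V_q\ge c\,V^{q/n}$. On the other hand, $\int\log h_K\,d\mu$ decays only logarithmically against the uniform measure. Using John's ellipsoid and symmetry, I expect to show that $\Phi\to+\infty$ along degenerating sequences. The Blaschke selection theorem then gives a minimizer among origin-symmetric bodies with nonempty interior.

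\textbf{Step 2: the ball is not a minimizer.} This is where $q>n$ is really used and is the main computation. Take $h_t=1+t\varphi$ with $\varphi$ an even spherical harmonic of degree $2$, so $\Delta\varphi=-2n\varphi$ and $\int\varphi=0$. Expand $\rho_{K_t}=h_t+\tfrac{t^2}{2}|\nabla\varphi|^2+O(t^3)$ to get the second-order expansion of $\tfrac1q\log\widetilde V_q(K_t)$. Together with $\int\log h_t=-\tfrac{t^2}{2}\int\varphi^2+O(t^3)$, this should give
\[
\Phi(K_t)-\Phi(B^n)=\frac{t^2}{2|\mathbb S^{n-1}|}\Big(\int|\nabla\varphi|^2+(q-1)\int\varphi^2-\int\varphi^2\cdot(\text{terms from }\tfrac1q\log)\Big)\;-\;\ldots,
\]
which, once simplified using $\int|\nabla\varphi|^2=2n\int\varphi^2$, should be a negative multiple of $(q-n)\int\varphi^2\,t^2$. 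So the second variation is strictly negative exactly when $q>n$, which is the instability expected from the log Brunn--Minkowski inequality for $\widetilde V_q$ failing in this range. Getting the exact constants right here is the delicate point, and I will double-check them against the case $q=n$, where the second variation should vanish to this order.

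\textbf{Step 3: conclusion.} By Step 2, $\inf\Phi<\Phi(B^n)$. Since $\Phi$ is dilation-invariant, the minimizer $K$ from Step 1 is not a dilate of $B^n$. After the rescaling above, $K$ and $L=B^n$ are origin-symmetric, $\widetilde C_q(K,\cdot)=\widetilde C_q(L,\cdot)$, and $K\neq L$.

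The main obstacles are the coercivity in Step 1, which must use $q>n$ and symmetry, and the exact second-variation sign in Step 2.
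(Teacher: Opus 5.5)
Your proposal has two concrete failures, and the second one is structural.

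\textbf{The optimization runs in the wrong direction.} Your $\Phi$ equals the paper's $\Phi_\mu$ up to an additive constant, and it is unbounded below. Take $K_\varepsilon=[-1,1]^{n-1}\times[-\varepsilon,\varepsilon]$. Then $\widetilde V_q(K_\varepsilon)\to0$, while $|u_1|\le h_{K_\varepsilon}(u)\le\sqrt n$ keeps $\int\log h_{K_\varepsilon}\,d\mu$ bounded, so $\Phi(K_\varepsilon)\to-\infty$. Your H\"older bound $\widetilde V_q\ge c\,V^{q/n}$ is a lower bound on $\widetilde V_q$ and cannot give the coercivity you claim, so no minimizer exists. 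The well-posed problem is to \emph{maximize} $\Phi$, equivalently to minimize $\int\log h_Q\,d\mu$ under $\widetilde V_q(Q)=|\mu|$. That is what Lemma~\ref{smooth-existence} does, and there $q>n$ is used only to show that the limit of a minimizing sequence has interior.

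\textbf{Once corrected, the ball gives nothing for $n<q\le 2n$.} You now need $\sup\Phi>\Phi(B^n)$. Since $\rho_K\le h_K$, the correct expansion is $\rho_{K_t}=1+t\varphi-\frac{t^2}{2}|\nabla\varphi|^2+O(t^3)$; you can check it on $B^n+te_1$. With $\int\varphi=0$ this gives
\[
\Phi(K_t)-\Phi(B^n)=\frac{t^2}{2|\mathbb S^{n-1}|}\Big(q\int\varphi^2-\int|\nabla\varphi|^2\Big)+O(t^3),
\]
which equals $\frac{(q-2n)t^2}{2|\mathbb S^{n-1}|}\int\varphi^2+O(t^3)$ for a degree-$2$ harmonic. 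In particular it does not vanish at $q=n$, so your proposed sanity check would also have failed. Decomposing into spherical harmonics, degree $k$ contributes $q-k(k+n-2)$.

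\begin{itemize}
\item For $n<q<2n$ every even degree $k\ge2$ gives a negative coefficient. So the second variation at $B^n$ is negative definite on even perturbations modulo dilations, and no perturbation of the ball increases $\Phi$.
\item Perturbations of the ball increase $\Phi$ only when $q>2n$. That is exactly the range already covered by Chen, Chen and Li.
\end{itemize}

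For $n<q\le2n$, i.e.\ the new part of the theorem, your scheme has no competitor to offer. The measure $\mu=\widetilde C_q(B^n,\cdot)$ is simply the wrong reference.

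\textbf{The missing idea is the paper's.} Argue by contradiction over \emph{all} reference measures $\widetilde C_q(S,\cdot)$ with $S\in C^2_+$. If the value of $\Phi_\mu$ were the same on all solutions, the maximizer from Lemma~\ref{smooth-existence} would yield the fixed-reference dual log-Minkowski inequality for every $S$. That in turn yields the dual log-Brunn--Minkowski inequality for all origin-symmetric pairs. Theorem~\ref{logmain} refutes this for every $q>n$, using rectangles near the square and thin boxes $A_t\times[-\varepsilon,\varepsilon]^{n-2}$. The instability lives near the cube, not near the ball.
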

	
	Note that the desired bodies $K$ and $L$ in Theorem \ref{min} can be chosen from the class $C^{\infty}_+$. Consequently, there exists a smooth, positive, even function $f$ on $\mathbb{S}^{n-1}$ such that the Monge-Amp\`ere equation on the sphere $\mathbb{S}^{n-1}$
	\begin{equation*}
		h(u) |\nabla h(u)|^{q-n} \det(h_{ij}(u) + h(u) \delta_{ij}) =f
	\end{equation*}
	has different classical solutions in  $C^\infty_+$ for every \(q>n\). Please refer to Remark \ref{remark} for details. In recent years, 	non-uniqueness of solutions to the Monge-Amp\`ere equations associated with Minkowski problems has  been intensively studied (\cite{CCL,CLZlog,HLW,wang2,liu,Li}).
	
	To prove Theorem \ref{unique} and Theorem \ref{min}, our strategy is to investigate  the \emph{logarithmic Brunn-Minkowski inequality for dual quermassintegrals}. 
	For a continuous  $h:\mathbb{S}^{n-1}\to(0,\infty)$, its \emph{Wulff shape}  is defined by
	\[[h]=\{x\in\mathbb R^n: x\cdot v\le h(v), \text{ for all } v\in\mathbb{S}^{n-1}\}.\]
	The unknown logarithmic Brunn--Minkowski inequality for dual quermassintegrals asserts that
	\begin{equation}\label{logBMq}
		\widetilde V_q((1-\lambda)\cdot K+_0\lambda\cdot L)
		\ge
		\widetilde V_q(K)^{1-\lambda}\widetilde V_q(L)^\lambda,
		\quad 0\le\lambda\le1,
	\end{equation}
	for all origin-symmetric convex bodies $K,L$ in $\mathbb R^n$. 
	Here, $(1-\lambda)\cdot K+_0\lambda\cdot L=[h_K^{1-\lambda}h_L^\lambda]$ is the geometric combination of \(K\) and \(L\).  Since $\widetilde V_n=V_n$, \eqref{logBMq} for $q=n$ becomes the following conjectured  logarithmic Brunn-Minkowski inequality by B\"or\"oczky, Lutwak, Yang and Zhang \cite{logBM}.
	
	\begin{logBMconjecture}
		If $K$ and $L$ are origin-symmetric convex bodies in $\mathbb R^n$, then for all $0<\lambda<1$,
		\[V_n((1-\lambda)\cdot K+_0\lambda\cdot L)
		\ge V_n(K)^{1-\lambda}V_n(L)^\lambda.\]
		In addition, equality is expected to hold if and only if
		\[K=K_1+\cdots+K_m, \quad L=L_1+\cdots+L_m, \]
		for compact convex sets $K_i$ and $L_i, i=1,2, \ldots, m,$ with dimension at least one such that $\sum_{i=1}^m\dim K_i=n$ and $K_i$ and $L_i$ are homothetic for each $i=1,\ldots,m$.
	\end{logBMconjecture}
	The $2$ dimensional logarithmic Brunn-Minkowski inequality was established in \cite{logBM}. In higher dimensions, it is known for complex bodies by Rotem \cite{Rotem}, for unconditional bodies by Saroglou \cite{Saroglou2015}, and for bodies with $n$ independent hyperplane symmetries by B\"or\"oczky and Kalantzopoulos \cite{BKsym}. Local results of the inequality near the Euclidean ball were obtained by Colesanti, Livshyts and Marsiglietti \cite{CLM} and by Kolesnikov and Milman \cite{KM}; Chen, Huang, Li and Liu \cite{CHLL} proved the global inequality for pairs of bodies near the ball. Further fixed-reference log-Minkowski inequalities under curvature-pinching hypotheses were established by Milman \cite{MilmanCentro} and by Ivaki and Milman \cite{IvakiMilman}. Van Handel \cite{vanHandel} proved the local inequality and characterized its equality cases for origin-symmetric zonoids; he also deduced the fixed-reference log-Minkowski inequality for arbitrary origin-symmetric comparison bodies. Xi \cite{XiReverse} gave a new proof of the latter and determined its full equality cases. Wang \cite{WangDeficit} proved a segment-addition monotonicity principle for the local deficit, yielding a new proof of the local zonoid result.
	
In this article, we establish the inequality \eqref{logBMq} with equality conditions for \(0<q<n\) and \(n=2\), and show  that it fails for \(q>n\ge 2\).
	
	\begin{theorem}\label{logmain2}
		If \(K\) and \(L\) are  origin-symmetric convex bodies in \(\R^2\) and \(0<q<2\), then 
		\[\widetilde V_q((1-\lambda)\cdot K+_0\lambda\cdot L) \ge \widetilde V_q(K)^{1-\lambda}\widetilde V_q(L)^\lambda,  \  \forall  \ 0<\lambda<1,\]
		with equality if and only if \(K\) and \(L\) are dilates.
	\end{theorem}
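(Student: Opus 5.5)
The plan is to reduce Theorem~\ref{logmain2} to a \emph{dual log-Minkowski inequality} of fixed-reference type:
\begin{equation*}
\int_{\mathbb S^{1}}\log\frac{h_L}{h_K}\,d\widetilde C_q(K,\cdot)\;\ge\;\frac{\widetilde V_q(K)}{q}\log\frac{\widetilde V_q(L)}{\widetilde V_q(K)},
\end{equation*}
for all origin-symmetric planar convex bodies $K,L$ and $0<q<2$, with equality only when $K,L$ are dilates. Once this is available, the product inequality follows by the standard argument. Put $Q=(1-\lambda)\cdot K+_0\lambda\cdot L$. The measure $\widetilde C_q(Q,\cdot)$ is concentrated on normals where $h_Q=h_K^{1-\lambda}h_L^\lambda$, since on the support of the surface-area-type measure the Wulff shape attains the given function. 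Apply the inequality with reference body $Q$, once to $K$ (weight $1-\lambda$) and once to $L$ (weight $\lambda$), and add. The left sides sum to $\int\big((1-\lambda)\log h_K+\lambda\log h_L-\log h_Q\big)d\widetilde C_q(Q,\cdot)=0$. This yields $0\ge \frac{\widetilde V_q(Q)}{q}\log\big(\widetilde V_q(K)^{1-\lambda}\widetilde V_q(L)^\lambda/\widetilde V_q(Q)\big)$, which is the claim. Equality forces equality in both applications, so $K$, $Q$, $L$ are dilates. The ingredient here is the variational formula $\frac{d}{dt}\widetilde V_q([h_Ke^{tf}])|_{t=0}=q\int f\,d\widetilde C_q(K,\cdot)$ of Huang--Lutwak--Yang--Zhang, which is also what identifies $\widetilde C_q$ as the first variation.

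To prove the dual log-Minkowski inequality, I would first treat $K,L\in C^\infty_+$ and then pass to general bodies by Hausdorff approximation, using weak continuity of $\widetilde C_q$. The equality case needs separate care at that stage.

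\textbf{Local step.} In the smooth planar case, write $h=h_K$ as a function of the angle $\theta$, so that $\rho^2=h^2+h'^2$ and $d\widetilde C_q=\frac12 h(h^2+h'^2)^{(q-2)/2}(h''+h)\,d\theta$. Compute the second variation of $t\mapsto\log\widetilde V_q([h e^{tf}])$ for even $f$, and show it is $\le0$. This is a Sturm--Liouville/Wirtinger-type inequality on the circle for $\pi$-periodic functions. Evenness removes the first Fourier mode (translations), and the constant mode gives the dilation kernel. The restriction $q<2$ should be exactly what makes the weight $(h^2+h'^2)^{(q-2)/2}$ favourable, so that the quadratic form is dominated by the $q=2$ form (the BLYZ planar case) plus a nonpositive correction. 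I expect this to be the main computational obstacle: controlling the $h'$-dependent terms coming from $\rho^{q-2}$ requires an integration by parts that uses the sign of $q-2$. I would first try to write the form as $\int(\ldots)\big((f'/\!\cdot)^2-4f^2\big)$ plus a term with a sign.

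\textbf{Globalization.} Concavity of $\log\widetilde V_q$ along geometric paths $t\mapsto[h_K^{1-t}h_L^t]$ then follows by applying the local statement at every intermediate body. Here $h_{Q_t}\le h_K^{1-t}h_L^t$ and monotonicity of $\widetilde V_q$ under inclusion give the correct direction when the path is replaced by the Wulff shapes. Integrating concavity from $t=0$ with the first-derivative formula gives the fixed-reference inequality. Finally, strict negativity of the second variation off the dilation direction yields the equality characterization.
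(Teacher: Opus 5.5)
Your reduction in the first paragraph is sound: use the fixed-reference inequality with reference body $Q=[h_K^{1-\lambda}h_L^\lambda]$, and note that $\widetilde C_q(Q,\cdot)$ is concentrated where $h_Q=h_K^{1-\lambda}h_L^\lambda$. The argument meant to produce the fixed-reference inequality, however, has genuine gaps.

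\textbf{The local step is not proved.} You never compute the second variation with the weight $(h^2+h'^2)^{(q-2)/2}$. The only place $q<2$ is supposed to enter is a sign in an integration by parts you do not carry out. The local claim is true, since it follows from the theorem, but your sketch does not establish it.

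\textbf{The globalization fails as stated.} Even for $K,L\in C^\infty_+$, the function $g_t=h_K^{1-t}h_L^t$ is in general not a support function. With $u=\log h$ one has $g_t''+g_t=g_t\bigl((1-t)u_K''+tu_L''+((1-t)u_K'+tu_L')^2+1\bigr)$, and convexity of $x\mapsto x^2$ gives only an upper bound for the squared term. So $Q_t=[g_t]$ satisfies $h_{Q_t}\le g_t$ with strict inequality on an arc, can have corners, and $\varphi(t)=\log\widetilde V_q(Q_t)$ need not be twice differentiable.

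Contrary to your remark, the inclusion also goes the wrong way. With $f=\log(h_L/h_K)$ one has $Q_{t+s}=[g_te^{sf}]\supseteq[h_{Q_t}e^{sf}]$. Hence the second variation at $Q_t$ along $h_{Q_t}e^{sf}$ bounds $\varphi(t+s)$ only from \emph{below}, which is useless for concavity.

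Concavity of $\varphi$ for all pairs is equivalent to the global inequality itself. Deriving it from a local inequality is a separate, nontrivial theorem (for volume: Chen--Huang--Li--Liu, Putterman). Its proof goes through existence, regularity and a continuity/degree argument for the associated Minkowski problem, none of which you supply for $\widetilde C_q$.

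\textbf{The equality case does not survive approximation.} Strict negativity of the second variation for smooth bodies is lost under Hausdorff approximation. So your equality characterization would not reach general bodies such as polygons.

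\textbf{The missing idea.} The paper proves nothing local. It reduces the statement to the planar volume log-Brunn--Minkowski inequality of B\"or\"oczky--Lutwak--Yang--Zhang.

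For $0<q<n$ the weight $|x|^{q-n}$ is radially decreasing, which gives the layer-cake formula $\widetilde V_q(A)=\frac{q(n-q)}{n}\int_0^\infty r^{q-n-1}V_n(A\cap rB^n)\,dr$ (Lemma \ref{layercake}). Since $h_{K\cap aB^n}\le\min\{h_K,a\}$, one gets $[h_{K\cap aB^n}^{1-\lambda}h_{L\cap bB^n}^{\lambda}]\subset M\cap a^{1-\lambda}b^{\lambda}B^n$ with $M=[h_K^{1-\lambda}h_L^\lambda]$.

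Applying the volume inequality to the truncated bodies then yields $G_M((1-\lambda)s+\lambda t)\ge G_K(s)^{1-\lambda}G_L(t)^\lambda$ for $G_A(s)=e^{(q-n)s}V_n(A\cap e^sB^n)$. One-dimensional Pr\'ekopa--Leindler integrates this to the claim.

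Equality is read off from the Pr\'ekopa--Leindler equality case together with the BLYZ equality case. The key input is that $K\cap rB^n$ has a spherical boundary patch, and hence no nontrivial direct-sum decomposition, for $r_-(K)<r<r_+(K)$ (Lemma \ref{sphericalpatch}). This is where $q<2$ is genuinely used: only for $q<n$ is $|x|^{q-n}$ a positive mixture of indicators of centered balls.
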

	
	\begin{theorem}\label{logmain}
		If  \(q>n\ge 2\),  then there exist origin-symmetric convex bodies \(K\) and \(L\) in \(\R^n\) such that
		\[
		\widetilde V_q\bigl(\frac{1}{2}\cdot K+_0\frac{1}{2}\cdot L\bigr)
		<
		\widetilde V_q(K)^{\frac{1}{2}}\widetilde V_q(L)^{\frac{1}{2}}.
		\]
	\end{theorem}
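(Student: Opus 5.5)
The plan is to use unconditional boxes, for which the geometric combination can be computed exactly. The counterexample then reduces to a one-variable asymptotic estimate showing that, for $q>n$, $\widetilde V_q$ of a very elongated box blows up.

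\textbf{Step 1 (the geometric combination of coordinate boxes).} For $a=(a_1,\dots,a_n)$ with all $a_i>0$, let
\[
P_a=[-a_1,a_1]\times\cdots\times[-a_n,a_n], \qquad h_{P_a}(u)=\sum_i a_i|u_i|.
\]
We claim that
\[
\tfrac12\cdot P_a+_0\tfrac12\cdot P_b=P_c, \qquad c_i=\sqrt{a_ib_i}.
\]
For the inclusion $P_c\subset[h_{P_a}^{1/2}h_{P_b}^{1/2}]$, the Cauchy--Schwarz inequality gives
\[
\Bigl(\sum a_i|u_i|\Bigr)^{1/2}\Bigl(\sum b_i|u_i|\Bigr)^{1/2}\ge \sum\sqrt{a_ib_i}\,|u_i| = h_{P_c}(u).
\]
For the reverse inclusion, test the Wulff-shape constraint only at $v=\pm e_i$. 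There $h_{P_a}^{1/2}h_{P_b}^{1/2}(\pm e_i)=c_i$, so every point $x$ of the Wulff shape satisfies $|x_i|\le c_i$, i.e. $x\in P_c$.

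\textbf{Step 2 (reduction to one function).} For $t>0$ set
\[
K=P_{(e^t,e^{-t},1,\dots,1)}, \qquad L=P_{(e^{-t},e^{t},1,\dots,1)} .
\]
Then $L$ is the image of $K$ under the orthogonal map swapping the first two coordinates, so $\widetilde V_q(K)=\widetilde V_q(L)$. By Step 1, $\tfrac12\cdot K+_0\tfrac12\cdot L=[-1,1]^n$. Hence it suffices to find some $t$ with
\[
\widetilde V_q(K_t)>\widetilde V_q([-1,1]^n), \qquad K_t:=P_{(e^t,e^{-t},1,\dots,1)} .
\]
The right-hand side is a fixed finite number, so it is enough to show $\widetilde V_q(K_t)\to\infty$ as $t\to\infty$ whenever $q>n$.

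\textbf{Step 3 (the main obstacle: the asymptotic lower bound).} Write $A=e^t$ and use polar coordinates,
\[
\widetilde V_q(K)=\frac1n\int_{\mathbb S^{n-1}}\rho_K^q\,du=\frac qn\int_K |x|^{q-n}\,dx,
\]
which holds for $q>0$. For $K=K_t$, take the region
\[
\{\,x\in K:\ x_1\in[A/2,A]\,\},
\]
which contains $[A/2,A]\times[-1/A,1/A]\times[-1,1]^{n-2}$. On this region $|x|\ge A/2$, and its volume is $\tfrac A2\cdot\tfrac 2A\cdot 2^{n-2}=2^{n-2}$. Since $q-n>0$, this gives
\[
\widetilde V_q(K_t)\ge \frac qn\,2^{n-2}\,(A/2)^{q-n}\longrightarrow\infty .
\]
This is the point where $q>n$ is essential. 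For $q\le n$ the same estimate gives nothing, which is consistent with Theorem \ref{logmain2}. Choosing $t$ large then yields the strict reverse inequality claimed in Theorem \ref{logmain}, for every $n\ge2$, because the construction used only the first two coordinates.

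If smooth examples are wanted later, for use in Theorem \ref{min}, one would approximate by $C^\infty_+$ bodies and use continuity of $\widetilde V_q$ together with the strictness of the inequality. That step is not needed for the present statement.
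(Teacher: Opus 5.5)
Your proof is correct, and it takes a genuinely different route from the paper's, although both use coordinate boxes.

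\textbf{What you share with the paper.} The Wulff-shape step is the same two-sided comparison. The paper shows that the geometric mean of $A_t=[-(1+t),1+t]\times[-(1-t),1-t]$ and $A_{-t}$ is $r_tA_0$. It gets $\supset$ from a pointwise inequality for $h_{A_t}h_{A_{-t}}$ and $\subset$ by testing at $\pm e_i$.

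\textbf{Where the paper goes next.} It then proves the strict inequality \emph{locally}. In the plane it expands $F_q(t)=\widetilde V_q(A_t)-r_t^q\widetilde V_q(A_0)$ to second order at $t=0$ and shows $F_q'(0)=0$ and $F_q''(0)>0$; this requires an integration-by-parts identity to fix the sign. For $n\ge3$ it cannot reuse this directly. It passes to thin slabs $A_t\times[-\varepsilon,\varepsilon]^{n-2}$ and uses a dominated-convergence expansion as $\varepsilon\to0$, which reduces to the planar computation with exponent $q-n+2>2$.

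\textbf{Why your route is simpler.} In the plane your normalization $(e^t,e^{-t},1,\dots,1)$ is the paper's family up to the dilation by $r_t$ and a reparametrization. Its advantage is that the geometric combination is exactly the cube $[-1,1]^n$ for every $t$. The crude bound $\widetilde V_q(K_t)\ge \frac qn 2^{n-2}(e^t/2)^{q-n}$ then finishes the proof in every dimension at once, with no Taylor expansion and no limiting procedure. It also proves more: the ratio $\widetilde V_q(K)^{1/2}\widetilde V_q(L)^{1/2}/\widetilde V_q(\frac12\cdot K+_0\frac12\cdot L)$ grows like $e^{(q-n)t}$, so it is unbounded.

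\textbf{What the paper's computation gives instead.} It yields a local statement in the plane. The inequality already fails for pairs $A_t,A_{-t}$ arbitrarily close to the square, so no Hausdorff neighborhood of $[-1,1]^2$ supports the dual log Brunn--Minkowski inequality when $q>2$. Your large-$t$ argument does not provide this. For the application to Theorem \ref{min}, either counterexample suffices.
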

	
	
	
	This paper is organized as follows. In Section 2, we collect some basic facts from convex geometry. Theorems \ref{logmain2} and \ref{unique} are proved in Section 3, and Theorems \ref{logmain} and  \ref{min} are proved in Section 4. 
	
	\vskip 20pt
	\section{\bf Preliminaries}
	\vskip 5pt
	
	For quick later reference, we collect some basic facts on convex bodies. Good references  are the books by Gardner \cite{Gardner1} and Schneider \cite{Sch}.
	
	Write $\mathcal K_e^n$ for the class of origin-symmetric convex bodies in $\mathbb{R}^n$. Let $\omega_n$ denote the volume of the unit ball \(B^n \) in $\mathbb{R}^n$. For
    convex bodies 	\(K\), \(L\)  in $\mathbb{R}^n$, their \emph{Minkowski combination}  is $$aK+bL=\{ax+by\,|\, x\in K, y\in L\}, \ \forall \ a,b>0.$$ Then, 
	$h_{aK+bL}(x)=ah_K(x)+bh_L(x),\ x\in \R^n.$
	
	Let \(K\) be a convex body in \(\R^n\) with the origin in its interior. Its \( q \)-th dual quermassintegral 
	\[
	\widetilde{V}_q(K) = \frac{1}{n} \int_{\mathbb{S}^{n-1}} \rho_K^q(u) \, du, \quad q\in \mathbb{R},
	\]
	is obvious  $q$-homogeneous, i.e., $\widetilde{V}_q(rK)=r^q\widetilde{V}_q(K)$ for  $r>0$. By the polar coordinate formula, 
	\begin{equation}\label{VQ}
		\widetilde{V}_q(K) = \frac{q}{n} \int_{K} |x|^{q-n} \, dx, \quad q>0.
	\end{equation}
	
	The dual curvature measure satisfies the following
	\begin{equation*}
		|\widetilde C_q(K,\cdot)|=\widetilde V_q(K),
		\quad
		\widetilde C_q(rK,\cdot)=r^q \widetilde C_q(K,\cdot),\quad r>0.
	\end{equation*}
	
	Let $\Omega\subset\mathbb S^{n-1}$ be a closed set not contained in any closed hemisphere and let $h:\Omega\to(0,\infty)$ be continuous. The \emph{Wulff shape} determined by $h$ is 
	\[
	[h]=\{x\in\mathbb R^n: x\cdot v\le h(v) \text{ for all } v\in\Omega\}.
	\]
	In particular, if $\Omega=\mathbb S^{n-1}$ and $h=h_K$ is the support function of a convex body $K$, then $[h_K]=K$. If $h_i\to h$ uniformly on $\Omega$, then Aleksandrov's convergence theorem for Wulff shapes gives $[h_i]\to [h]$ with respect to the Hausdorff metric.
	
	The \emph{Blaschke selection theorem} reads: If \(\{K_i\}_{i=1}^\infty\) is a uniformly bounded sequence of convex bodies in \(\R^n\), then there exists a subsequence \(\{K_{i_j}\}_{j=1}^\infty\) and a compact convex set \(K\subset\mathbb R^n\) such that
	\(K_{i_j}\to K\)
	with respect to the Hausdorff metric.
	
	The following one-dimensional Pr\'ekopa--Leindler equality is needed. Refer to \cite{GardnerBM} for a good survey on the Pr\'ekopa--Leindler inequality. 
	
	\begin{lemma}[Pr\'ekopa--Leindler]\label{PLlemma}
	Let $0<\lambda<1$. 	If  $f,g,h$ are positive continuous integrable functions on $\mathbb R$  satisfying
		$h((1-\lambda)s+\lambda t)
		\ge f(s)^{1-\lambda}g(t)^\lambda, \ \forall \ s,t\in\mathbb R,$
		then
		\[\int_{\mathbb R}h\ge
		(\int_{\mathbb R}f)^{1-\lambda}
		(\int_{\mathbb R}g)^\lambda.\]
		If the equality holds, then there exist $\tau\in\mathbb R$ and $a>0$ such that for all $s\in\mathbb R$
		\[
		g(s+\tau)=a f(s),
		\qquad
		h(s+\lambda\tau)=f(s)^{1-\lambda}g(s+\tau)^\lambda.
		\]
	\end{lemma}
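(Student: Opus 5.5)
The plan is to prove the inequality by the standard one-dimensional transport argument, and then read off the equality cases from the equality cases of the single AM--GM step in it. Put $F=\int_{\mathbb R}f$ and $G=\int_{\mathbb R}g$; both are positive and finite. Define $u,v:(0,1)\to\mathbb R$ by
\[
\frac1F\int_{-\infty}^{u(x)}f=x,\qquad \frac1G\int_{-\infty}^{v(x)}g=x .
\]
Since $f,g$ are positive and continuous, the distribution functions are $C^1$ and strictly increasing bijections $\mathbb R\to(0,1)$. Hence $u,v$ are $C^1$, strictly increasing and onto $\mathbb R$, with
\[
u'(x)=\frac{F}{f(u(x))},\qquad v'(x)=\frac{G}{g(v(x))}.
\]
Set $w=(1-\lambda)u+\lambda v$. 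This is also $C^1$, strictly increasing and onto $\mathbb R$.

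The inequality then follows from a chain of three steps:
\[
\int_{\mathbb R}h=\int_0^1 h(w(x))\,w'(x)\,dx\ \ge\ \int_0^1 f(u)^{1-\lambda}g(v)^{\lambda}\bigl((1-\lambda)u'+\lambda v'\bigr)dx\ \ge\ \int_0^1 f(u)^{1-\lambda}g(v)^{\lambda}(u')^{1-\lambda}(v')^{\lambda}dx=F^{1-\lambda}G^{\lambda}.
\]
The first equality is the change of variables $s=w(x)$, valid because $w$ is a $C^1$ bijection onto $\mathbb R$. The first inequality uses the hypothesis with $s=u(x)$, $t=v(x)$, together with $w'>0$. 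The second inequality is the weighted AM--GM inequality. The last equality holds because $f(u)u'=F$ and $g(v)v'=G$ pointwise.

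For the equality case, assume $\int h=F^{1-\lambda}G^\lambda$. Then both inequalities in the chain are equalities.

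1. All integrands are continuous and the pointwise inequalities hold everywhere. So equality of the integrals forces pointwise equality on all of $(0,1)$.
2. Equality in AM--GM gives $u'\equiv v'$. Hence $v=u+\tau$ for some constant $\tau\in\mathbb R$.
3. Substituting into $u'=F/f(u)$ and $v'=G/g(v)$ gives $f(u(x))/F=g(u(x)+\tau)/G$ for all $x$. Since $u$ is onto $\mathbb R$, this yields $g(s+\tau)=af(s)$ for all $s$, with $a=G/F>0$.
4. Pointwise equality in the first inequality, combined with $w'>0$, gives $h(w(x))=f(u(x))^{1-\lambda}g(u(x)+\tau)^\lambda$. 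Since $w=u+\lambda\tau$ and $u$ ranges over all of $\mathbb R$, this is exactly $h(s+\lambda\tau)=f(s)^{1-\lambda}g(s+\tau)^\lambda$ for all $s$.

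I expect the main point needing care to be the regularity and surjectivity of $u,v,w$. Positivity and continuity of $f,g$ make $u,v$ genuine $C^1$ bijections onto $\mathbb R$. This is what lets the change of variables for $h$ hold with equality. It is also what upgrades the almost-everywhere equalities to identities valid for every $s\in\mathbb R$, rather than just on the supports.
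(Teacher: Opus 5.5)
Your proof is correct and complete, including the equality case. Positivity and continuity of $f,g$ make $u,v,w$ into $C^1$ increasing bijections $(0,1)\to\mathbb R$. The substitution $s=w(x)$ is justified on compact subintervals and then by monotone convergence. Since $\int_{\mathbb R}h<\infty$, every integral in your chain is finite, so equality at the two ends forces the continuous nonnegative integrand differences to vanish identically. In the AM--GM step you implicitly divide by $f(u)^{1-\lambda}g(v)^\lambda$, which is legitimate because this factor is strictly positive.

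The paper itself gives no proof of this lemma. It is quoted from the literature with a pointer to Gardner's survey, so there is no in-paper argument to match. The other classical route normalizes $\sup f=\sup g$, uses $\{h\ge t\}\supset(1-\lambda)\{f\ge t\}+\lambda\{g\ge t\}$, and applies the one-dimensional Brunn--Minkowski inequality level by level. Compared with that, your transport argument fits this statement better in two ways. First, it needs no boundedness of $f,g$. Second, it produces the equality case directly: equality in the single AM--GM step gives $u'=v'$, hence one \emph{constant} shift $\tau$. From the level-set proof one would still have to show that the level-by-level equality cases assemble into a single translation. Your identities also hold for every $s\in\mathbb R$, which is the form used in Step 2 of Lemma \ref{logToDualOne}.
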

	
	A point $x\in\partial K$ is called a \emph{smooth boundary point} if there exists a unique exterior unit normal. Denote by $\partial'K$ the set of all smooth boundary points. The spherical image map
	$\nu_K:\partial'K\to\mathbb S^{n-1}$ sends each smooth boundary point to its unique outer unit normal. We shall use the following lemma by B\"or\"oczky and Kalantzopoulos \cite{BKsym}.
	
	\begin{lemma}[{\cite[Lemma 1]{BKsym}}]\label{BKdirectsum}
		Let $K$ be a convex body in $\R^n$, and let $\xi_1,\ldots,\xi_m$, $m\geq2$, be non-trivial complementary linear subspaces which together span $\R^n$. Then
		\[\nu_K(\partial'K)\subset\xi_1\cup\cdots\cup\xi_m\]
		if and only if there exist compact convex sets $K_1,\ldots,K_m$ with
		\(
		\operatorname{lin}(K_i-K_i)=
		\big(\sum_{j\ne i}\xi_j\big)^\perp
		\)
		(and hence $\dim K_i=\dim\xi_i$) for $i=1,\ldots,m$ such that
		$K=K_1+\cdots+K_m.$
		
		If $K$ is unconditional and $K_1,\ldots,K_m$ are unconditional, then $K_i\subset\xi_i$, $i=1,\ldots,m$.
	\end{lemma}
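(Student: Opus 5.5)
The proof has two directions plus the unconditional addendum. Both directions rest on a duality between the decomposition $\R^n=\xi_1\oplus\cdots\oplus\xi_m$ and the complementary decomposition $\R^n=W_1\oplus\cdots\oplus W_m$, where $W_i=\big(\sum_{j\ne i}\xi_j\big)^\perp$. The key observation is $W_i\perp\xi_j$ for $j\ne i$. Let $P_i$ denote the projection onto $W_i$ along $\bigoplus_{j\ne i}W_j$. Then for $v\in\xi_i$ and any $y\in\R^n$ we get $y\cdot v=(P_iy)\cdot v$, because the components of $y$ in the other $W_j$ are orthogonal to $v$. Moreover, $\xi_i$ pairs nondegenerately with $W_i$, so $\xi_i$ can be identified with the dual of $W_i$.

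\emph{The ``only if'' direction.} First I will use the standard fact (Schneider) that a convex body is the intersection of its supporting halfspaces at smooth boundary points. This gives $K=[h_K|_\Omega]$ with $\Omega=\overline{\nu_K(\partial'K)}\subset(\xi_1\cup\cdots\cup\xi_m)\cap\mathbb S^{n-1}$. By the observation above, the constraint $y\cdot v\le h_K(v)$ for $v\in\Omega\cap\xi_i$ involves only $P_iy$. Hence
\[
K=\{y:\ P_iy\in K_i \text{ for all } i\},\qquad K_i=\{w\in W_i:\ w\cdot v\le h_K(v)\ \forall v\in\Omega\cap\xi_i\}.
\]
Since $\R^n=\bigoplus W_i$, this set equals $K_1+\cdots+K_m$. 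Each $K_i$ is closed and convex, and it is bounded because $K\supset K_i+\sum_{j\ne i}y_j$ for any fixed choice $y_j\in K_j$. Full-dimensionality of $K$ forces $\dim K_i=\dim W_i=\dim\xi_i$, which gives $\operatorname{lin}(K_i-K_i)=W_i$.

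\emph{The ``if'' direction.} Let $x=\sum x_i\in\partial K$ with $x_i\in K_i$. I will use $N_K(x)=\bigcap_iN_{K_i}(x_i)$. Since $K_i$ spans a translate of $W_i$, its normal cone has the form $N_{K_i}(x_i)=W_i^\perp+C_i$, where $W_i^\perp=\bigoplus_{j\ne i}\xi_j$ and $C_i\subset\xi_i$ is the normal cone of $K_i$ at $x_i$ relative to $W_i$, transported to $\xi_i$ via the duality above. Writing vectors in $\xi$-coordinates, the intersection is then $C_1\oplus\cdots\oplus C_m$. This is a single ray exactly when one $C_i$ is a ray and all the others are $\{0\}$. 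So at a smooth point the normal lies in some $\xi_i$.

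\emph{The unconditional addendum, and the main obstacle.} If $K_i$ is unconditional, it is invariant under $y\mapsto-y$, so $o\in K_i$ by convexity. Its affine hull is then the linear space $W_i$. In the intended setting the $\xi_i$ are coordinate subspaces, so $W_i=\xi_i$ and $K_i\subset\xi_i$. I expect the main obstacle to be making the normal-cone bookkeeping in the ``if'' direction rigorous when the $\xi_i$ are not mutually orthogonal. The identification of the relative normal cone $C_i$ with a cone inside $\xi_i$ relies precisely on the nondegenerate pairing between $\xi_i$ and $W_i$, and that step must be checked carefully.
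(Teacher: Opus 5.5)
The paper quotes this lemma from \cite{BKsym} without proof, so your argument has to stand on its own. Both directions of the equivalence are correct, including your boundedness argument for $K_i$, which also covers the case $\Omega\cap\xi_i=\emptyset$. The step you flag as the main obstacle is routine. Since $\R^n=W_i^\perp\oplus\xi_i$ and $K_i-x_i\subset W_i$, write $u=u'+u_i$ with $u'\in W_i^\perp$ and $u_i\in\xi_i$. Then $u\cdot(y-x_i)=u_i\cdot(y-x_i)$ for all $y\in K_i$, so $N_{K_i}(x_i)=W_i^\perp+C_i$ with simply $C_i=N_{K_i}(x_i)\cap\xi_i$, and no duality transport is needed. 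Your conclusion also holds: the intersection is $C_1\oplus\cdots\oplus C_m$, and it is a ray iff exactly one $C_i\neq\{0\}$, because $\xi_i\cap\xi_k=\{0\}$.

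The genuine gap is in the unconditional addendum. The statement does not assume the $\xi_i$ are coordinate subspaces. For a non-orthogonal splitting one has $W_i\neq\xi_i$ in general, so your argument proves only $K_i\subset W_i$. The missing idea is that unconditionality of the $K_i$ itself forces $W_i=\xi_i$:
\begin{itemize}
\item Since $o\in K_i$, you have $\operatorname{lin}K_i=\operatorname{lin}(K_i-K_i)=W_i$. So $W_i$ is invariant under every coordinate reflection $R_k$.
\item For $w\in W_i$ this gives $\tfrac12(w-R_kw)=w_ke_k\in W_i$. Hence $W_i=\operatorname{span}\{e_k:k\in I_i\}$ is a coordinate subspace.
\item Complementarity of the $W_i$ makes $I_1,\ldots,I_m$ a partition of $\{1,\ldots,n\}$.
\item For $j\ne i$ one has $\xi_i\subset\sum_{l\ne j}\xi_l$, so $\xi_i\perp W_j$. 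Thus $\xi_i\perp e_k$ for every $k\notin I_i$, i.e.\ $\xi_i\subset W_i$.
\item Equal dimensions give $\xi_i=W_i$, and hence $K_i\subset\xi_i$.
\end{itemize}
This argument does not even use the unconditionality of $K$.

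This gap does not affect how the paper uses the lemma. The paper only invokes the ``if'' direction, in Lemma \ref{sphericalpatch} and Step 2 of Lemma \ref{logToDualOne}. There it starts from complementary direction spaces $W_i$ and sets $\xi_i=\big(\sum_{j\ne i}W_j\big)^\perp$. Your duality shows that $\big(\sum_{j\ne i}\xi_j\big)^\perp=W_i$, so the lemma applies.
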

	
	\begin{lemma}[{\cite[Theorem 4.5]{huang}}]
		Let $h_0:\Omega\to(0,\infty)$ and $f:\Omega\to\mathbb R$ be continuous, and let $[h_t]$ be a logarithmic family of Wulff shapes associated with $(h_0,f)$, that is,
		\[
		\log h_t(v)=\log h_0(v)+tf(v)+o(t,v),
		\quad \frac{o(t,\cdot)}{t}\to 0,
		\]
		uniformly on $\Omega$. Then, for $q\ne0$,
		\begin{equation}\label{HLYZ45}
			\frac{d}{dt}\widetilde V_q([h_t])\bigg|_{t=0}
			=q\int_\Omega f(v)\,d\widetilde C_q([h_0],v).
		\end{equation}
	\end{lemma}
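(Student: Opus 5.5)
The approach is to differentiate under the integral sign in
\[
\widetilde V_q([h_t])=\frac1n\int_{\mathbb S^{n-1}}\rho_{[h_t]}^q(u)\,du .
\]
The whole argument rests on a pointwise formula for the derivative of $\log\rho_{[h_t]}$ at $t=0$. Write $K=[h_0]$ and $\rho_t=\rho_{[h_t]}$. For the Wulff shape we have the representation
\[
\rho_t(u)=\min_{v\in\Omega,\ u\cdot v>0}\frac{h_t(v)}{u\cdot v},
\qquad\text{that is,}\qquad
\log\rho_t(u)=\min_{v\in\Omega,\ u\cdot v>0}\bigl(\log h_t(v)-\log(u\cdot v)\bigr).
\]
The minimum is attained because $\Omega$ is compact and not contained in a closed hemisphere. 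Moreover, when $t=0$ the minimizers $v$ are exactly the outer unit normals $v\in\Omega$ of $K$ at $\rho_0(u)u$; this holds because $h_0(v)\ge h_K(v)$ on $\Omega$, with equality for every $v\in\Omega$ that supports $K$ at $\rho_0(u)u$. Let $\alpha_K(u)$ denote the radial Gauss map. For almost every $u\in\mathbb S^{n-1}$ the point $\rho_0(u)u$ is a smooth boundary point, so the minimizer $v_0=\alpha_K(u)$ is unique.

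\textbf{Key step (Danskin/envelope argument).} I will show that for such regular $u$
\[
\frac{d}{dt}\log\rho_t(u)\Big|_{t=0}=f(\alpha_K(u)).
\]

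For the upper bound, plug $v_0$ into the minimum. This gives
\[
\log\rho_t(u)\le \log\rho_0(u)+tf(v_0)+o(t),
\]
which bounds the difference quotient from above for $t>0$ and from below for $t<0$.

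For the reverse inequality, let $v_t$ be a minimizer for $\rho_t(u)$. Then
\[
\log\rho_t(u)=\log h_0(v_t)-\log(u\cdot v_t)+tf(v_t)+o(t)\ \ge\ \log\rho_0(u)+tf(v_t)+o(t).
\]
By compactness, any limit point of $v_t$ as $t\to0$ is a minimizer at $t=0$; uniqueness then forces $v_t\to v_0$. Continuity of $f$ gives $f(v_t)\to f(v_0)$. Combining the two bounds yields the derivative.

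I expect this step to be the main obstacle. The delicate part is keeping $u\cdot v_t$ bounded away from $0$, so that the $o(t)$ term and the convergence $v_t\to v_0$ are uniform. This follows from $\rho_t\to\rho_0$, which in turn follows from the uniform bounds below.

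\textbf{Domination and conclusion.} Since $|\log h_t-\log h_0|\le C|t|$ uniformly on $\Omega$ for small $|t|$, we get $e^{-C|t|}h_0\le h_t\le e^{C|t|}h_0$. Hence
\[
e^{-C|t|}K\subset[h_t]\subset e^{C|t|}K,
\]
so $|\log\rho_t-\log\rho_0|\le C|t|$ on the whole sphere. Therefore the quotients $(\rho_t^q-\rho_0^q)/t$ are uniformly bounded, using $\rho_0$ bounded above and below by positive constants. Dominated convergence then gives
\[
\frac{d}{dt}\widetilde V_q([h_t])\Big|_{t=0}=\frac qn\int_{\mathbb S^{n-1}}f(\alpha_K(u))\,\rho_K^q(u)\,du .
\]
Finally, $\widetilde C_q(K,\cdot)$ is by definition the pushforward of $\frac1n\rho_K^q\,du$ under $\alpha_K$, because $\alpha_K^*(\eta)=\alpha_K^{-1}(\eta)$ up to a null set. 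It is concentrated on $\Omega$, since normals at regular points lie in $\Omega$ by the minimizer characterization. The right-hand side therefore equals $q\int_\Omega f\,d\widetilde C_q([h_0],\cdot)$, which is \eqref{HLYZ45}.
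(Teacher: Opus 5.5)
Your proof is correct. It is essentially the argument behind \cite[Theorem 4.5]{huang}, which the paper quotes without proof: the a.e.\ derivative $f(\alpha_K(u))$ of $\log\rho_{[h_t]}(u)$ via convergence of the optimizers at regular directions, the uniform bound $|\log\rho_{[h_t]}-\log\rho_{[h_0]}|\le C|t|$, dominated convergence, and the push-forward description of $\widetilde C_q([h_0],\cdot)$. The main difference seems to be that you work directly with $\rho_{[h]}(u)=\min_{v\in\Omega,\,u\cdot v>0}h(v)/(u\cdot v)$, whereas the cited proof, as I recall it, first treats support functions of convex hulls and then transfers to Wulff shapes by polarity.

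One claim should be weakened. At $t=0$ the minimizers are only those outer normals $v\in\Omega$ at $\rho_0(u)u$ with $h_0(v)=h_K(v)$, not all of them. For example, take $\Omega=\mathbb S^1$ and $h_0\ge h_{[-1,1]^2}$ with strict inequality only near $(1,1)/\sqrt2$. Then $[h_0]=[-1,1]^2$, yet the normal $(1,1)/\sqrt2$ at the vertex $(1,1)$ is not a minimizer. However, you only ever use ``minimizer $\Rightarrow$ normal'' together with the existence of a minimizer. So uniqueness of the minimizer at regular $u$, and the concentration of $\widetilde C_q([h_0],\cdot)$ on $\Omega$, are unaffected.
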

	
	\begin{lemma}[{\cite[Lemma 3.6]{huang}}]\label{weak}
		Let \(q\in\mathbb{R}\). If $K_i\to K_0$ in the Hausdorff metric and all $K_i$ (including \(K_0\)) contain the origin in their interiors, then \(\widetilde C_q(K_i,\cdot)\to \widetilde C_q(K_0,\cdot)\), weakly.
	\end{lemma}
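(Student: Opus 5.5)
The plan is to write the dual curvature measure through the radial Gauss map and then apply dominated convergence. For a convex body $K$ with the origin in its interior, let $\omega_K\subset\mathbb S^{n-1}$ be the set of directions $u$ for which $\rho_K(u)u$ is not a smooth boundary point of $K$. The set $\omega_K$ has spherical Lebesgue measure zero, because the non-smooth boundary points of a convex body form an $\mathcal H^{n-1}$-null set and radial projection onto the sphere is bi-Lipschitz when the origin is interior. On $\mathbb S^{n-1}\setminus\omega_K$ define the radial Gauss map $\alpha_K(u)=\nu_K(\rho_K(u)u)$. Unwinding the definition of $\widetilde C_q$ via $\alpha_K^*$, we get, for every continuous $g$ on $\mathbb S^{n-1}$,
\[
\int_{\mathbb S^{n-1}} g(v)\,d\widetilde C_q(K,v)=\frac1n\int_{\mathbb S^{n-1}\setminus\omega_K} g(\alpha_K(u))\,\rho_K^q(u)\,du .
\]
Weak convergence therefore reduces to convergence of the right-hand side with $K=K_i$ to the same expression with $K=K_0$.

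\emph{Step 1 (radial functions).} Since $K_i\to K_0$ in the Hausdorff metric and $0\in\operatorname{int}K_0$, there exist $0<r<R$ with $rB^n\subset K_i\subset RB^n$ for all large $i$. Hence $\rho_{K_i}\to\rho_{K_0}$ uniformly on $\mathbb S^{n-1}$; this is the standard equivalence of Hausdorff and radial convergence for bodies uniformly containing a ball about the origin. In particular $\rho_{K_i}^q\to\rho_{K_0}^q$ uniformly for every $q\in\mathbb R$, with the uniform bound $\rho_{K_i}^q\le\max(r^q,R^q)$.

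\emph{Step 2 (radial Gauss maps; the main point).} Let $\omega=\omega_{K_0}\cup\bigcup_i\omega_{K_i}$, which is a countable union of null sets and hence null. Fix $u\notin\omega$ and set $x_i=\rho_{K_i}(u)u$, $v_i=\alpha_{K_i}(u)$. By Step 1, $x_i\to x_0=\rho_{K_0}(u)u$. Take any limit point $v$ of $(v_i)$ along a subsequence. For each $y\in K_0$ choose $y_i\in K_i$ with $y_i\to y$. From $y_i\cdot v_i\le x_i\cdot v_i$ we obtain in the limit $y\cdot v\le x_0\cdot v$. Thus $v$ is an outer normal of $K_0$ at $x_0$, and since $x_0$ is a smooth point, $v=\alpha_{K_0}(u)$. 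Because every subsequential limit is the same point, compactness of the sphere gives $\alpha_{K_i}(u)\to\alpha_{K_0}(u)$ for all $u\notin\omega$, that is, almost everywhere.

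\emph{Step 3 (conclusion).} For continuous $g$, the integrands $g(\alpha_{K_i}(u))\rho_{K_i}^q(u)$ converge almost everywhere to $g(\alpha_{K_0}(u))\rho_{K_0}^q(u)$ by Steps 1 and 2. They are uniformly bounded by $\|g\|_\infty\max(r^q,R^q)$. Dominated convergence then yields $\int g\,d\widetilde C_q(K_i,\cdot)\to\int g\,d\widetilde C_q(K_0,\cdot)$, which is the asserted weak convergence.

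The only delicate point is Step 2: we need the exceptional null set to be chosen uniformly in $i$, which the countable union handles. We also need the limit-of-supporting-hyperplanes argument, which relies on Hausdorff approximation of each point of $K_0$ by points of $K_i$.
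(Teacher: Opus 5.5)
Your proposal is correct. It is essentially the argument behind the cited result: this paper does not reprove the lemma but quotes \cite[Lemma 3.6]{huang}, whose proof likewise writes $\int g\,d\widetilde C_q(K,\cdot)=\frac1n\int g(\alpha_K(u))\rho_K^q(u)\,du$ via the radial Gauss map. It then combines uniform convergence of radial functions with almost-everywhere convergence of $\alpha_{K_i}$ to $\alpha_{K_0}$ (off the null set $\omega_{K_0}\cup\bigcup_i\omega_{K_i}$, by the limit-of-normals argument) and concludes by dominated convergence.
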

	
	Let \( \mu \) be a non-zero finite Borel measure on \( \mathbb{S}^{n-1} \),  $q\ne0$, and let $K$ be a convex body in $\mathbb{R}^n$ with the origin in its interior.
	In \cite{huang}, the authors defined the functional
	\[
	\Phi_\mu(K) = -\frac{1}{|\mu|} \int_{\mathbb{S}^{n-1}} \log h_K(v) \, d\mu(v) + \frac{1}{q}\log \frac{\widetilde{V}_q(K)}{\omega_n},
	\]
	and proved the following variational result.
	\begin{lemma}[{\cite{huang}}]\label{HLYZ-5.1}
		Let \(q>0\) and \(\mu\) be a finite even Borel measure on
		\(\mathbb{S}^{n-1}\) with \(|\mu|>0\).  If \(K\in \mathcal K_e^n\)  
		satisfies that 
		$\Phi_\mu(K)=\sup\{\Phi_\mu(Q):
		\widetilde V_q(Q)=|\mu|,\ Q\in\mathcal K_e^n\},$
		then
		$\widetilde C_q(K,\cdot)=\mu .$
	\end{lemma}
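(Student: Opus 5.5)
The plan is a first-variation argument: remove the volume constraint using homogeneity, perturb $K$ logarithmically in the direction of an arbitrary even continuous function, and read off the Euler--Lagrange equation from \eqref{HLYZ45}.

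\emph{Step 1: $\Phi_\mu$ is scale invariant.} For $r>0$ we have $h_{rK}=rh_K$. The first term of $\Phi_\mu$ therefore changes by $-\log r$. Since $\widetilde V_q(rK)=r^q\widetilde V_q(K)$, the second term changes by $+\log r$. Hence $\Phi_\mu(rQ)=\Phi_\mu(Q)$. Every $Q\in\mathcal K_e^n$ can be dilated to satisfy $\widetilde V_q(Q)=|\mu|$, so $K$ maximizes $\Phi_\mu$ over all of $\mathcal K_e^n$, with no constraint.

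\emph{Step 2: the perturbation.} Fix an even continuous $f$ on $\mathbb S^{n-1}$. Set $h_t=h_Ke^{tf}$ and $K_t=[h_t]$ for $|t|$ small. Each $h_t$ is even, positive and continuous, so $K_t$ is an origin-symmetric convex body with the origin in its interior. Also $K_0=[h_K]=K$. The family $K_t$ is a logarithmic family of Wulff shapes associated with $(h_K,f)$ with $o(t,\cdot)\equiv0$.

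\emph{Step 3: the comparison function.} Because $h_{K_t}\le h_t$ on the sphere, we get $-\int\log h_{K_t}\,d\mu\ge-\int\log h_t\,d\mu$. Define
\[
\psi(t)=-\frac1{|\mu|}\int_{\mathbb S^{n-1}}\bigl(\log h_K+tf\bigr)\,d\mu+\frac1q\log\frac{\widetilde V_q(K_t)}{\omega_n}.
\]
Then $\psi(t)\le\Phi_\mu(K_t)\le\Phi_\mu(K)=\psi(0)$, so $\psi$ attains its maximum at $t=0$.

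\emph{Step 4: differentiate.} By \eqref{HLYZ45}, $\psi$ is differentiable at $0$. Since $q>0$ and $\widetilde V_q(K)=|\mu|$,
\[
0=\psi'(0)=-\frac1{|\mu|}\int f\,d\mu+\frac{1}{q\widetilde V_q(K)}\,q\int f\,d\widetilde C_q(K,\cdot),
\]
and hence $\int f\,d\mu=\int f\,d\widetilde C_q(K,\cdot)$ for every even continuous $f$.

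\emph{Step 5: identify the measures.} $\widetilde C_q(K,\cdot)$ is even because $K=-K$, and $\mu$ is even by hypothesis. Two even finite Borel measures that agree on all even continuous functions agree on all continuous functions: split $g=\tfrac12(g+g(-\cdot))+\tfrac12(g-g(-\cdot))$, and note that odd functions integrate to zero against both measures. By the Riesz representation theorem, $\widetilde C_q(K,\cdot)=\mu$.

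\emph{Main obstacle.} Only even perturbations are admissible, since they must keep $K_t$ in $\mathcal K_e^n$. This is exactly why the evenness of both measures is needed in Step 5. The remaining delicate point is differentiability of $t\mapsto\widetilde V_q([h_t])$ when $[h_t]$ is not merely $K$ rescaled, and this is supplied directly by \eqref{HLYZ45}.
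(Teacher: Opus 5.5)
Your argument is correct. It is the standard first-variation proof of Huang--Lutwak--Yang--Zhang, which the paper cites without reproducing: scale invariance turns the constrained maximum into an unconstrained one, $h_{[h_t]}\le h_t$ lets you compare against the logarithmic Wulff family $[h_Ke^{tf}]$, \eqref{HLYZ45} gives the derivative, and evenness means you only need to test against even $f$. One point you used implicitly: the hypothesis has to be read as $\widetilde V_q(K)=|\mu|$ (i.e.\ $K$ lies in the constraint set), because by scale invariance every dilate of a maximizer also attains the supremum, yet such a dilate only satisfies $\widetilde C_q(K,\cdot)=\frac{\widetilde V_q(K)}{|\mu|}\,\mu$.
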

	
The following regularity theorem is the special case  of the result by B\"or\"oczky and Fodor \cite[Theorem 1.5]{BF} with \(p=0\) and \(Q=B^n\).

\begin{theorem}[{\cite{BF}}]\label{BF-regularity}
		Let \(q\in\mathbb R\), \(\alpha\in(0,1)\) and  \(K\) be a convex body in $\R^n$ with the origin in its interior. If
		$d\widetilde C_q(K,\cdot)=f\,du,$	where \(f\in C^\alpha(\mathbb S^{n-1})\) and
		\(0<c_1\le f\le c_2\), then \(K\in C^2_+\) and \(h_K\in C^{2,\alpha}(\mathbb S^{n-1})\). 
	\end{theorem}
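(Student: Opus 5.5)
The plan is to reduce the statement to Caffarelli's regularity theory for the classical Monge--Amp\`ere equation, by rewriting the dual curvature measure as a surface area measure whose density is controlled. The first step is a change of measure. For $\widetilde C_q(K,\cdot)$-almost every $u$, the reverse Gauss image $\nu_K^{-1}(u)$ is a single boundary point $x_K(u)$. Combining the definition of $\widetilde C_q$ with the polar formula for the cone-volume measure $V_K=\widetilde C_n(K,\cdot)$ gives
\[ d\widetilde C_q(K,u)=|x_K(u)|^{q-n}\,dV_K(u),\qquad dV_K=\tfrac1n h_K\,dS_K . \]
Hence, in the weak (Aleksandrov) sense,
\[ dS_K = g\,du,\qquad g(u)=n\,h_K(u)^{-1}|x_K(u)|^{n-q}f(u). \]
Because the origin lies in the interior of $K$, there are radii $0<r\le R$ with $r\le h_K\le R$ and $r\le |x|\le R$ for all $x\in\partial K$. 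Together with $c_1\le f\le c_2$, this shows that $g$ is bounded between two positive constants. At this stage we only know that $g$ is measurable and bounded: $x_K$ need not be continuous yet.

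The second step obtains strict convexity and $C^{1}$ regularity from a bounded density alone, following Caffarelli. Restrict $h_K$ to tangent hyperplanes, for example $w(y)=h_K(y,-1)$ for $y\in\R^{n-1}$, and similarly around every direction. Then $w$ is a convex Aleksandrov solution of
\[ \det D^2 w = g\bigl(\tfrac{(y,-1)}{\sqrt{1+|y|^2}}\bigr)(1+|y|^2)^{-\frac{n+1}{2}}, \]
whose right-hand side is bounded between positive constants on bounded sets. Caffarelli's theorem says that when the Monge--Amp\`ere measure is bounded above and below, any set on which $w$ coincides with a supporting affine function (a line segment on which $w$ is affine) has no extremal points in the interior of the domain. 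Since $h_K$ is defined on the whole sphere, which has no boundary, this rules out such sets: $h_K$ is strictly convex in the tangential directions, so $K$ is $C^1$. Similarly, $K$ has no segments in its boundary; otherwise $S_K$ would charge lower-dimensional sets in a way incompatible with $g\le C$ via the dual statement. Caffarelli's $C^{1,\beta}$ estimate for strictly convex solutions then gives $h_K\in C^{1,\beta}(\mathbb S^{n-1})$. Consequently $x_K(u)=\nabla h_K(u)$ (the spherical gradient plus $h_K u$) is $C^{0,\beta}$. I expect this step to be the main obstacle: excluding segments and faces on a closed manifold requires Caffarelli's localization argument, applied carefully in each chart.

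The third step is the bootstrap. Now $g=n h_K^{-1}|\nabla h_K|^{n-q} f$ is H\"older continuous with exponent $\min(\alpha,\beta)$, and it is positive. Caffarelli's interior $C^{2,\gamma}$ estimate for $\det D^2w$ with positive H\"older right-hand side and a strictly convex solution yields $h_K\in C^{2,\min(\alpha,\beta)}$. Then $\nabla h_K\in C^{1}$, so $g\in C^\alpha$, and one more application gives $h_K\in C^{2,\alpha}(\mathbb S^{n-1})$. Finally, $\det(h_{ij}+h_K\delta_{ij})=g>0$ everywhere, together with the positive semidefiniteness of $(h_{ij}+h_K\delta_{ij})$, shows that all principal radii are positive and finite. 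Hence $\partial K$ is $C^2$ with positive Gauss curvature, i.e.\ $K\in C^2_+$.
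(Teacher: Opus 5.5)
Your proposal is correct and matches the argument behind the result the paper cites without proof (B\"or\"oczky--Fodor, resting on Caffarelli's theory): you rewrite $\widetilde C_q(K,\cdot)$ as $S_K$ with a density pinched between positive constants, apply Caffarelli's strict convexity and $C^{1,\beta}$ results to $h_K$ restricted to tangent hyperplanes, and then bootstrap with his interior $C^{2,\gamma}$ estimates. Your separate ``no segments in $\partial K$'' step is unnecessary, and its justification as phrased (that $S_K$ would charge a lower-dimensional set) fails for faces of dimension below $n-1$; but once $h_K\in C^{1,\beta}$, every support set $F(K,u)=\{\nabla h_K(u)\}$ is already a point.
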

To go a step further,	if
\(f\in C^\infty(\mathbb S^{n-1})\), then \(K\in C^\infty_+\).
Indeed, by Theorem \ref{BF-regularity}, it follows that $h_K\in C^{2,\alpha}(\mathbb S^{n-1})$. So the matrix $(h_{ij}+h_K\delta_{ij})$ is positive definite and 
	\[
	\det(h_{ij}+h_K\delta_{ij})
	=nf h_K^{-1}(h_K^2+|\nabla_{\mathbb S^{n-1}}h_K|^2)^{\frac{n-q}{2}}.
	\]
	Since the positive definite matrix  $(h_{ij}+h_K\delta_{ij})$ is  continuous on the compact sphere, its linearized equation is uniformly elliptic. 
	Since $h_K>0$, both $h_K^{-1}$ and $(h_K^2+|\nabla_{\mathbb S^{n-1}}h_K|^2)^{\frac{n-q}{2}}$ are smooth functions of $(h_K,\nabla_{\mathbb S^{n-1}}h_K)$. Thus, if $f\in C^\infty$, the equation is a smooth uniformly elliptic equation in local coordinates. By  \cite[Lemma 17.16]{GT}, it follows that  $h_K\in C^{k+2,\alpha}(\mathbb S^{n-1})$ for every $k\ge1$. Thus $h_K\in C^\infty(\mathbb S^{n-1})$ and therefore $K\in C^\infty_+$.
	
	\vskip 10pt
	\section{\bf The uniqueness}
	\vskip 5pt
	
	\begin{lemma}\label{layercake}
		If $A\in\mathcal K_e^n$ and $0<q<n$, then
		\[
		\widetilde{V}_q(A)=\frac{q(n-q)}n\int_0^\infty r^{q-n-1}V_n(A\cap rB^n)\,dr.\]
	\end{lemma}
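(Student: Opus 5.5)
We prove the formula by starting from the polar-coordinate representation \eqref{VQ}, namely $\widetilde V_q(A)=\frac{q}{n}\int_A |x|^{q-n}\,dx$ for $q>0$. We then rewrite the weight $|x|^{q-n}$ as an integral in $r$ and exchange the order of integration by Tonelli's theorem. The origin-symmetry of $A$ plays no role. We only need that $A$ is a convex body containing the origin in its interior, so that $V_n(A\cap rB^n)$ is finite and measurable in $r$.

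The key identity is the following. For $0<q<n$ and $x\neq 0$,
\[
|x|^{q-n}=(n-q)\int_{|x|}^\infty r^{q-n-1}\,dr .
\]
This holds because $q-n<0$, so $r^{q-n}\to0$ as $r\to\infty$ and the integral converges. Writing the lower limit as an indicator, $\mathbf 1_{\{|x|\le r\}}=\mathbf 1_{rB^n}(x)$, we get
\[
\int_A|x|^{q-n}\,dx=(n-q)\int_A\int_0^\infty r^{q-n-1}\mathbf 1_{rB^n}(x)\,dr\,dx .
\]
The integrand is nonnegative and jointly measurable, so Tonelli's theorem lets us swap the integrals. The inner integral then becomes $\int_A\mathbf 1_{rB^n}(x)\,dx=V_n(A\cap rB^n)$. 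Multiplying by $q/n$ gives exactly the stated formula.

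Finally, we check that both sides are finite, although Tonelli's theorem already guarantees that they agree in $[0,\infty]$. Near $r=0$ we have $V_n(A\cap rB^n)=\omega_n r^n$ for small $r$, since the origin is interior. The integrand is then $\omega_n r^{q-1}$, which is integrable because $q>0$. For large $r$, $V_n(A\cap rB^n)=V_n(A)$, and $r^{q-n-1}$ is integrable at infinity because $q<n$. There is no real obstacle here; the only point needing care is that the range $0<q<n$ is used in both places: $q<n$ for the convergence of the $r$-integral representing $|x|^{q-n}$, and $q>0$ for \eqref{VQ} and integrability at $0$.
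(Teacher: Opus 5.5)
Your proof is correct and follows the paper's argument: the identity $|x|^{q-n}=(n-q)\int_{|x|}^\infty r^{q-n-1}\,dr$, an exchange of the order of integration (justified by Tonelli since the integrand is nonnegative), and the same finiteness check using $A\cap rB^n=rB^n$ for small $r$ and $V_n(A\cap rB^n)\le V_n(A)$. Your side remark that origin-symmetry plays no role, and only the origin being an interior point is used, is also accurate.
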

	
	\begin{proof}
Since
		\[
		|x|^{q-n}=(n-q)\int_{|x|}^\infty r^{q-n-1}\,dr, \ \forall x\ne0,
		\]
from \eqref{VQ} and the Fubini theorem, we have
		\[
		\begin{aligned}
			\widetilde{V}_q(A)
			&=\frac qn\int_A |x|^{q-n}\,dx 
			=\frac{q(n-q)}n\int_A\int_{|x|}^\infty r^{q-n-1}\,dr\,dx \\
			&=\frac{q(n-q)}n\int_0^\infty r^{q-n-1}V_n(A\cap rB^n)\,dr.
		\end{aligned}
		\]
		Since $A\cap rB^n=rB^n$ for all sufficiently small $r>0$ and $V_n(A\cap rB^n)\le V_n(A)$  for all $r>0$, the integral is finite. 
	\end{proof}
	
	The following lemma  will be used in the equality discussion. Here a \emph{non-trivial direct-sum decomposition} means a decomposition $K=K_1+\cdots+K_m$, $m\ge2$, into compact convex sets of positive dimensions whose direction spaces $\operatorname{lin}(K_i-K_i)$ are complementary.
	
	\begin{lemma}\label{sphericalpatch}
		Let $K$ be a convex body in $\R^n$ with the origin in its interior, and let
		\[
		r_-(K)=\min_{u\in\mathbb S^{n-1}}\rho_K(u),
		\quad
		r_+(K)=\max_{u\in\mathbb S^{n-1}}\rho_K(u).
		\]
		If $r\in(r_-(K),r_+(K))$, then $K\cap rB^n$ admits no non-trivial direct-sum decomposition. 
	\end{lemma}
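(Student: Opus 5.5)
We argue by contradiction. Set $P=K\cap rB^n$ with $r_-(K)<r<r_+(K)$, and suppose $P=P_1+\cdots+P_m$ is a non-trivial direct-sum decomposition. The plan is to show that the boundary of $P$ contains an open piece of the sphere $r\mathbb S^{n-1}$, and that such a spherical patch cannot occur on the boundary of a non-trivial direct sum.

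\emph{Step 1: $\partial P$ contains a relatively open spherical patch.} Since $r<r_+(K)$, choose $u_0$ with $\rho_K(u_0)>r$. By continuity of $\rho_K$, the set $U=\{u\in\mathbb S^{n-1}:\rho_K(u)>r\}$ is a nonempty open subset of the sphere. For $u\in U$ the radial function of $P$ is $\rho_P(u)=\min\{\rho_K(u),r\}=r$. Hence $rU\subset\partial P$, and every point $ru$ with $u\in U$ has $P$ contained in $rB^n$ and lies on $r\mathbb S^{n-1}$. It follows that $ru$ is a smooth boundary point of $P$ with outer normal $\nu_P(ru)=u$. Consequently $\nu_P(\partial'P)\supset U$, which is a nonempty open subset of $\mathbb S^{n-1}$. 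The assumption $r>r_-(K)$ is only needed to guarantee that $P\ne rB^n$. Strictly speaking the argument does not even need it, since a ball is itself indecomposable; we simply note it.

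\emph{Step 2: contradiction with the direct sum.} By Lemma~\ref{BKdirectsum}, a non-trivial decomposition $P=P_1+\cdots+P_m$ with complementary direction spaces gives complementary subspaces $\xi_i=(\sum_{j\ne i}\operatorname{lin}(P_j-P_j))^\perp$, each of dimension $\dim P_i<n$, and yields
\[\nu_P(\partial'P)\subset\xi_1\cup\cdots\cup\xi_m.\]
If one prefers not to rely on the "only if" direction of the lemma as stated, this inclusion can be checked directly. A support set of a direct sum in direction $v$ is the sum of the faces $F(P_i,v)$. At a smooth point the normal cone is one-dimensional, so the normal must be orthogonal to all but one summand's direction space; otherwise the face sum would have a normal cone of dimension at least two. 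A finite union of proper subspaces meets $\mathbb S^{n-1}$ in a nowhere dense set, so it cannot contain the open set $U$. This is the desired contradiction.

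The main obstacle is making Step 2 rigorous: either justifying that the necessity direction of Lemma~\ref{BKdirectsum} applies to the compact convex set $P$, or proving directly that smooth-point normals of a direct sum lie in $\bigcup\xi_i$. The direct proof would go as follows. Write $v=\sum v_i$ with $v_i\in\xi_i$. If two components $v_i,v_j$ are nonzero, then the face $F(P,v)=\sum_k F(P_k,v)$ has the same normal cone as the face determined by perturbing $v$ within $\xi_i+\xi_j$. This is because $h_P(v)=\sum_k h_{P_k}(v_k)$, where $h_{P_k}$ depends only on the $\xi_k$-component. Hence the normal cone is at least two-dimensional, and the point is not smooth.
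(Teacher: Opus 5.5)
Your proposal is correct and follows essentially the same route as the paper: the open spherical patch $rU\subset\partial P$ gives $U\subset\nu_P(\partial'P)$, which is incompatible with the inclusion $\nu_P(\partial'P)\subset\xi_1\cup\cdots\cup\xi_m$ that Lemma~\ref{BKdirectsum} supplies for a non-trivial direct sum. Your explicit choice $\xi_i=(\sum_{j\ne i}\operatorname{lin}(P_j-P_j))^\perp$ and the direct face argument are useful extra detail; note only that uniqueness of the normal at $ru$ comes from $rU\subset P$ with $U$ open (equivalently, $ru\in\operatorname{int}K$), not from $P\subset rB^n$ alone.
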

	\begin{proof}
		Let $r\in(r_-(K),r_+(K))$ and 
		\[
		U=\{u\in\mathbb S^{n-1}:\rho_K(u)>r\}.
		\]
		Since \(\rho_K\) is continuous and \(\mathbb S^{n-1}\) is compact, $U$ contains a non-empty relatively open subset of $\mathbb S^{n-1}$. For $u\in U$, the point $ru$ belongs to $\operatorname{int}K$ and lies on $\partial(rB^n)$. Hence the set $rU$ is contained in $\partial(K\cap rB^n)$ and is smooth, and every boundary point \(ru\in rU\) has outer unit normal $u$, and therefore $U\subset \nu_{K\cap rB^n}(\partial'(K\cap rB^n))$.
		
		Assume that $K\cap rB^n$ admits a non-trivial direct-sum decomposition. By Lemma \ref{BKdirectsum}, $\nu_{K\cap rB^n}(\partial'(K\cap rB^n))$ is contained in a finite union of proper linear subspaces of $\R^n$. But  $U\subset \nu_{K\cap rB^n}(\partial'(K\cap rB^n))$, which is impossible since a non-empty relatively open subset of $\mathbb S^{n-1}$ cannot be contained in a finite union of proper linear subspaces.
	\end{proof}

	\begin{lemma}\label{logToDualOne}
		Let $0<q<n$ and $0<\lambda<1$. If the log Brunn--Minkowski inequality
		\begin{equation}\label{volumeLBM}
			V_n([h_K^{1-\lambda}h_L^\lambda])
			\ge
			V_n(K)^{1-\lambda}V_n(L)^\lambda
		\end{equation}
		holds for all $K,L\in\mathcal K_e^n$, then 
		\begin{equation}\label{dualLBM}
			\widetilde V_q([h_K^{1-\lambda}h_L^\lambda])
			\ge
			\widetilde V_q(K)^{1-\lambda}\widetilde V_q(L)^\lambda.
		\end{equation}
		If the equality in \eqref{volumeLBM} is given by the direct-sum equality condition stated in the log Brunn--Minkowski conjecture, then the equality in \eqref{dualLBM} holds if and only if $K$ and $L$ are dilates.
	\end{lemma}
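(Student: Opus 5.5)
Write $M=[h_K^{1-\lambda}h_L^\lambda]$. The plan is to use the layer-cake formula of Lemma \ref{layercake} to reduce \eqref{dualLBM} to the volume inequality \eqref{volumeLBM} applied to truncated bodies, and then to combine the slices with the one-dimensional Pr\'ekopa--Leindler inequality (Lemma \ref{PLlemma}) in the logarithmic radius variable.

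\textbf{Slice-wise inclusion.} For $r,s,t>0$ with $r=s^{1-\lambda}t^\lambda$, I claim
\[
[h_{K\cap sB^n}^{1-\lambda}h_{L\cap tB^n}^{\lambda}]\subset M\cap rB^n .
\]
Since $h_{K\cap sB^n}\le \min(h_K,s)$ and $h_{L\cap tB^n}\le\min(h_L,t)$, the geometric mean is at most both $h_K^{1-\lambda}h_L^\lambda$ and $s^{1-\lambda}t^\lambda=r$. Hence the Wulff shape is contained in both $M$ and $rB^n$. Applying \eqref{volumeLBM} to the origin-symmetric bodies $K\cap sB^n$ and $L\cap tB^n$ then gives
\[
V_n(M\cap rB^n)\ge V_n(K\cap sB^n)^{1-\lambda}V_n(L\cap tB^n)^\lambda .
\]

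\textbf{Prékopa--Leindler step.} Substitute $r=e^{x}$ in Lemma \ref{layercake}:
\[
\widetilde V_q(A)=\frac{q(n-q)}{n}\int_{\mathbb R} e^{(q-n)x}V_n(A\cap e^{x}B^n)\,dx .
\]
Set $F(x)=e^{(q-n)x}V_n(K\cap e^xB^n)$, $G(y)=e^{(q-n)y}V_n(L\cap e^yB^n)$ and $H(z)=e^{(q-n)z}V_n(M\cap e^zB^n)$. These are positive and continuous. They are integrable because each behaves like $e^{qx}\omega_n$ as $x\to-\infty$ and decays like $e^{(q-n)x}$ as $x\to+\infty$, using $0<q<n$. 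The exponential factor is multiplicative along convex combinations, so the slice estimate yields $H((1-\lambda)x+\lambda y)\ge F(x)^{1-\lambda}G(y)^\lambda$. Lemma \ref{PLlemma} then gives \eqref{dualLBM}.

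\textbf{Equality.} Sufficiency is immediate. If $L=cK$, then $M=c^\lambda K$, and $q$-homogeneity of $\widetilde V_q$ gives equality. For necessity, assume equality in \eqref{dualLBM}. The equality clause of Lemma \ref{PLlemma} supplies $\tau$ and $a>0$ with $G(x+\tau)=aF(x)$ and $H(x+\lambda\tau)=F(x)^{1-\lambda}G(x+\tau)^\lambda$ for all $x$. Set $c=e^\tau$, $s=e^x$, $t=cs$ and $r=c^\lambda s$.

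\emph{Comparing the two sides as $x\to-\infty$.} All the balls are then contained in the bodies, so $F\sim\omega_ne^{qx}$ and $G(x+\tau)\sim\omega_ne^{q(x+\tau)}$. This forces $a=c^q$, and therefore $V_n(L\cap csB^n)=c^nV_n(K\cap sB^n)$ for all $s$.

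\emph{Equality in each slice.} The second identity now reads
\[
V_n(M\cap rB^n)=V_n(K\cap sB^n)^{1-\lambda}V_n(L\cap tB^n)^\lambda .
\]
Combined with the chain of inclusion and \eqref{volumeLBM}, this forces equality in \eqref{volumeLBM} for the pair $K\cap sB^n$, $L\cap csB^n$.

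\emph{Concluding from the equality condition.} By the assumed equality characterization, either the two bodies are dilates or both admit a non-trivial direct-sum decomposition. Choose $s\in(r_-(K),r_+(K))$. If $K$ is a ball, choose $s$ so that $cs$ lies in the corresponding interval for $L$; if both are balls we are done. By Lemma \ref{sphericalpatch}, $K\cap sB^n$ admits no non-trivial decomposition, so $L\cap csB^n=d(K\cap sB^n)$ for some $d>0$. Comparing volumes shows $d=c$. Thus $L\cap csB^n=c(K\cap sB^n)$ for all such $s$, in particular for $s$ arbitrarily close to $r_+(K)$. Letting $s\uparrow r_+(K)$ gives $c\,r_+(K)\le r_+(L)$, and by symmetry these agree, so $L=cK$.

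\textbf{Main obstacle.} The delicate part is this last step. It requires choosing admissible radii for both bodies simultaneously, handling the degenerate ball case, and verifying that equality in the integral genuinely propagates to equality slice-by-slice for every $s$. That propagation uses continuity together with the pointwise identity supplied by Lemma \ref{PLlemma}.
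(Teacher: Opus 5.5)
Your proposal is correct and follows the paper's proof essentially step for step: the same truncation inclusion $[h_{K\cap sB^n}^{1-\lambda}h_{L\cap tB^n}^{\lambda}]\subset M\cap s^{1-\lambda}t^\lambda B^n$, Pr\'ekopa--Leindler in the logarithmic radius, the constant fixed as $c^q$ by letting $x\to-\infty$, slice-wise equality, and Lemma \ref{sphericalpatch}. The only difference is in the final step, where the paper's argument is a bit cleaner. If $K$ is a ball, the paper takes $s$ large so that the truncations are $K$ and $L$ themselves; otherwise it combines $cK\subset L$ with $V_n(L)=c^nV_n(K)$ (your volume identity as $s\to\infty$). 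This avoids your ``by symmetry'' step, which tacitly requires $L$ not to be a ball; that does hold, since $c(K\cap sB^n)$ is not a ball for $s\in(r_-(K),r_+(K))$.
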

	
	\begin{proof}We divide the proof into two steps.
		
		\textbf{Step 1.} Prove the inequality \eqref{dualLBM}.
		
		Write
		\(M=[h_K^{1-\lambda}h_L^\lambda].\)
		For $a,b>0$, let
		\[
		K_a=K\cap aB^n,
		\quad
		L_b=L\cap bB^n,
		\quad
		N_{a,b}=[h_{K_a}^{1-\lambda}h_{L_b}^\lambda].
		\]
		Since $h_{K_a}\le h_K$ and $h_{L_b}\le h_L$, it gives that $N_{a,b}\subset M$; Since $h_{K_a}\le a$ and $h_{L_b}\le b$, we  have $N_{a,b}\subset a^{1-\lambda}b^\lambda B^n$. Hence,
		$N_{a,b}\subset M\cap a^{1-\lambda}b^\lambda B^n.$
		
		By the monotonicity of the volume functional \(V_n\) and \eqref{volumeLBM} for $K_a$ and $L_b$, we obtain
		\begin{equation}\label{Nab}
			V_n(M\cap a^{1-\lambda}b^\lambda B^n)
			\ge V_n(N_{a,b})
			\ge V_n(K_a)^{1-\lambda}V_n(L_b)^\lambda.
		\end{equation}
		Let $a=e^s,$  $b=e^t$, and 
		$G_A(s)=e^{(q-n)s}V_n(A\cap e^sB^n),\ A\in \mathcal{K}_e^n.$
		Then
		\[
		G_M((1-\lambda)s+\lambda t)
		\ge
		G_K(s)^{1-\lambda}G_L(t)^\lambda, \ \text{for} \ 0<\lambda<1.
		\]
		By Lemma \ref{PLlemma}, we have
		\[
		\int_{\mathbb R}G_M(s)\,ds
		\ge
		(\int_{\mathbb R}G_K(s)\,ds)^{1-\lambda}
		(\int_{\mathbb R}G_L(s)\,ds)^\lambda, \ \text{for} \ 0<\lambda<1.
		\]
		Added with 
		\[
		\int_{\mathbb R}G_A(s)\,ds
		=
		\int_0^\infty r^{q-n-1}V_n(A\cap rB^n)\,dr
		\]
		and	Lemma \ref{layercake}, we obtain  \eqref{dualLBM}.

		\textbf{Step 2.} Handle the equality condition.
		
If  $K$ and $L$ are dilates,	by the homogeneity of $\widetilde V_q$, 	the equality holds.
	
    Assume the equality holds in \eqref{dualLBM}, i.e.
		\[
		\int_{\mathbb R}G_M(s)\,ds
		=
		\big(\int_{\mathbb R}G_K(s)\,ds\big)^{1-\lambda}
		\big(\int_{\mathbb R}G_L(s)\,ds\big)^\lambda.
		\]
	By Lemma \ref{PLlemma}, there exist $\tau\in\mathbb R$ and $c>0$ such that for all $s\in\mathbb R$,
		\[G_L(s+\tau)=cG_K(s)
		\quad\text{and}\quad
		G_M(s+\lambda\tau)=G_K(s)^{1-\lambda}G_L(s+\tau)^\lambda.
		\]
		 Letting $s\to-\infty$ in the first identity gives $c=e^{q\tau}$. So
		\begin{equation}\label{vol}
			V_n(L\cap e^{s+\tau}B^n)=e^{n\tau}V_n(K\cap e^sB^n),
			\quad s\in\mathbb R,
		\end{equation}
	and the second identity becomes  the following
		\[
		V_n(M\cap e^{s+\lambda\tau} B^n)
		=
		V_n(K\cap e^sB^n)^{1-\lambda}V_n(L\cap e^{s+\tau}B^n)^\lambda.
		\]

Meanwhile, 		by \eqref{Nab} we have
		\[
		V_n(M \cap e^{s+\lambda \tau} B^n) \geq V_n(N_{e^s, e^{s + \tau}}) \geq V_n(K \cap e^s B^n)^{1-\lambda} V_n(L \cap e^{s+\tau} B^n)^\lambda.
		\]
		Hence,
		\[
		V_n(N_{e^s,e^{s+\tau}})
		= V_n(K\cap e^sB^n)^{1-\lambda}V_n(L\cap e^{s+\tau}B^n)^\lambda
		\]
		and the pair
		$K\cap e^sB^n$
		and $L\cap e^{s+\tau}B^n$
		enjoy the equality condition in \eqref{volumeLBM} for every $s\in\mathbb R$.
		
		If $K$ is a Euclidean ball, then for all sufficiently large $s$ the two truncated bodies are $K$ and $L$. Since  $\partial K$ is smooth and $\nu_K(\partial'K)=\mathbb S^{n-1}$, by Lemma \ref{BKdirectsum}, $K$ admits no non-trivial direct-sum decomposition.
		The assumption on equality conditions in \eqref{volumeLBM} implies that $L$ is a dilate of $K$. 
		
		Assume $K$ is not a Euclidean ball. Let
		\[
		r_-(K)=\min_{u\in\mathbb S^{n-1}}\rho_K(u),
		\quad
		r_+(K)=\max_{u\in\mathbb S^{n-1}}\rho_K(u).
		\]
		For every $r\in(r_-(K),r_+(K))$, Lemma \ref{sphericalpatch} shows that $K\cap rB^n$ admits no non-trivial direct-sum decomposition. By the  equality conditions in \eqref{volumeLBM}, 
		$L\cap e^\tau rB^n=c_r(K\cap rB^n)$
		for some $c_r>0$. \eqref{vol} gives $c_r=e^\tau$, and therefore
		\[
		L\cap e^\tau rB^n=e^\tau(K\cap rB^n),
		\quad r\in(r_-(K),r_+(K)).
		\]
		It follows that $e^\tau K\subset L$. Indeed, if \(x\in K\) and \(|x|<r_+(K)\), choose \(r\in(r_-(K),r_+(K))\) with \(r>|x|\), then \(e^\tau x\in L\); if \(|x|=r_+(K)\), then \(e^\tau x\in L\) follows by approximation. Letting $s\to\infty$ in the volume identity gives $V_n(L)=e^{n\tau}V_n(K)$. Thus $L=e^\tau K$.
	\end{proof}
	
	\begin{proof}[Proof of Theorem \ref{logmain2}]
		Since the planar log Brunn--Minkowski conjecture has been  proved in \cite{logBM}, by Lemma \ref{logToDualOne}, we conclude the proof of Theorem \ref{logmain2} immediately.
	\end{proof}
	
	\begin{proof}[Proof of Theorem \ref{unique}]
		We divide the proof into two steps.
		
		\textbf{Step 1.} Prove the log Minkowski inequality for \(\widetilde V_q\).
		
Let \(K,L\in\mathcal K_e^2\).	For \(q>0\), write
		\[
		\overline{V_q}(K)=\big(\frac{\widetilde V_q(K)}{\omega_2}\big)^{\frac1q}, \quad \text{and} \quad K_t=[h_K^{1-t}h_L^t],\quad \forall \ 0\le t\le1.
		\]
		
		Let
		\[
		\varphi(t)=\log\overline{V_q}(K_t), \quad 0\le t\le1.
		\]
		If \(r=(1-\theta)s+\theta t\), then
		\[
		h_{K_s}^{1-\theta}h_{K_t}^{\theta}\le h_K^{1-r}h_L^r.
		\]
		By the  monotonicity of \(\widetilde{V}_q\) and Theorem \ref{logmain2}, we have 
		\[
		\begin{aligned}
			&\overline{V_q}(K_r)=\overline{V_q}([h_K^{1-r}h_L^r])
			\ge \overline{V_q}([h_{K_s}^{1-\theta}h_{K_t}^{\theta}])
			\ge	\overline{V_q}([h_{K_s}])^{1-\theta}\overline{V_q}([h_{K_t}])^\theta =
			\overline{V_q}(K_s)^{1-\theta}\overline{V_q}(K_t)^\theta,
		\end{aligned}
		\]
		which implies that  \(\varphi(t)\) is concave on $[0,1]$.
		
	Since \(\varphi\) is concave,	by the variational formula \eqref{HLYZ45}, it follows that 
		\[\varphi'_+(0)=
		\frac1{\widetilde V_q(K)}
		\int_{\mathbb S^1}\log\frac{h_L}{h_K}\,d\widetilde C_q(K,\cdot)\ge \varphi(1)-\varphi(0).
		\]
		Thus
		\begin{equation}\label{logMineq}
			\log\frac{\overline{V_q}(L)}{\overline{V_q}(K)}
			\le
			\frac1{\widetilde V_q(K)}
			\int_{\mathbb S^1}\log\frac{h_L}{h_K}\,d\widetilde C_q(K,\cdot).
		\end{equation}
		
		If the equality holds in \eqref{logMineq}, then \(\varphi'_+(0)=\varphi(1)-\varphi(0)\). So the concave function \(\varphi\) is affine on \([0,1]\), and therefore the equality holds in the log Brunn--Minkowski inequality for \(\widetilde V_q\) for  \(K,L\). Thus,  \(K\) and \(L\) are dilates. 
       
 Conversely,  if \(K\) and \(L\) are dilates, it immediately  gives the equality in \eqref{logMineq}.
		
		\textbf{Step 2.} Prove the uniqueness.
		
		Suppose that \(K,L\in\mathcal K_e^2\) satisfying 
		$\widetilde C_q(K,\cdot)=\widetilde C_q(L,\cdot)=\mu.$
		
		Since \(|\widetilde C_q(M,\cdot)|=\widetilde V_q(M)\), we have
		$\widetilde V_q(K)=|\mu|=\widetilde V_q(L).$
		Applying \eqref{logMineq} to \((K,L)\), we obtain 
		\[
		0\le
		\frac1{|\mu|}\int_{\mathbb S^1}\log\frac{h_L}{h_K}\,d\mu.
		\]
		Applying \eqref{logMineq} to \((L,K)\) gives 
		\[
		0\le \frac1{|\mu|}\int_{\mathbb S^1}\log\frac{h_K}{h_L}\,d\mu =-
		\frac1{|\mu|}\int_{\mathbb S^1}\log\frac{h_L}{h_K}\,d\mu.
		\] So both equalities hold, and \(L=cK\) for some \(c>0\). Since
		\[
		\widetilde C_q(cK,\cdot)=c^q\widetilde C_q(K,\cdot)
		\]
		and \(q>0\), we get \(c=1\). Thus \(K=L\).
	\end{proof}
	
	
	
	\vskip 10pt
	\section{\bf The non-uniqueness}
	\vskip 5pt
	
	\begin{proof}[Proof of Theorem \ref{logmain}]
		We divide the proof into two steps.
		
		\noindent
		\textbf{Step 1: The two-dimensional case.}
		
		Assume \(q>2\). For \(t\in(-1,1)\), define a family of rectangles by
		\[
		A_t=[-(1+t),1+t]\times[-(1-t),1-t]\subset\mathbb R^2.
		\]
		Then
		\[
		\widetilde V_{q}(A_t)=\widetilde V_{q}(A_{-t}),\quad \text{and} \quad \frac{A_t+A_{-t}}2=A_0=[-1,1]^2.
		\]
		
		Let $r_t=(1-t^2)^{\frac{1}{2}}$.  For $u=(u_1,u_2)\in \mathbb S^{1}$, set $a=|u_1|$ and $b=|u_2|$. Then
		\[
		h_{A_t}(u)=(1+t)a+(1-t)b,
		\quad
		h_{A_{-t}}(u)=(1-t)a+(1+t)b
		\]
        and
		\[
		(h_{A_t}h_{A_{-t}})(u)=(a+b)^2-t^2(a-b)^2\ge r_t^2(a+b)^2=h_{r_tA_0}(u)^2.
		\]
		So, $[(h_{A_t}h_{A_{-t}})^{\frac{1}{2}}]\supset r_tA_0$. 
        
        Meanwhile, for $i=1,2,$ since 
        $(h_{A_t}h_{A_{-t}})^{\frac12}(\pm e_i)=r_t,$ by the definition of Wulff shape it follows that 
		$x\cdot(\pm e_i)\le r_t \ \text{for} \ x\in[(h_{A_t}h_{A_{-t}})^{\frac12}]$. So, 
		$[(h_{A_t}h_{A_{-t}})^{\frac12}]\subset[-r_t,r_t]^2=r_tA_0$. 
        
        Therefore, $
		[(h_{A_t}h_{A_{-t}})^{\frac{1}{2}}]=r_tA_0.$
		
		From formula \eqref{VQ}, we have
		\[\widetilde V_q(A_t)=
		2q\int_0^{1+t}\int_0^{1-t}
		(x^2+y^2)^{\frac{q-2}{2}}\,dy\,dx.\]
		So
		\[
		\begin{aligned}
			\frac{d}{dt}\widetilde V_q(A_t)
			=2q(\int_0^{1-t}((1+t)^2+y^2)^{\frac{q-2}{2}}\,dy
			-\int_0^{1+t}(x^2+(1-t)^2)^{\frac{q-2}{2}}\,dx)
		\end{aligned}\]
		and
		\[
		\begin{aligned}
			\frac{d^2}{dt^2}\widetilde V_q(A_t)
			= &
			2q\big(
			(q-2)(1+t)
			\int_0^{1-t}
			((1+t)^2+y^2)^{\frac{q-4}{2}}\,dy  \\
			&+
			(q-2)(1-t)
			\int_0^{1+t}
			(x^2+(1-t)^2)^{\frac{q-4}{2}}\,dx  -
			2((1+t)^2+(1-t)^2)^{\frac{q-2}{2}}
			\big).
		\end{aligned}
		\]
		Hence,
\begin{equation}\label{logFprimeFsecond}
			\begin{aligned}
				\frac{d}{dt}\widetilde V_q(A_t)\Big|_{t=0}=0
				\quad \text{and} \quad 
				\frac{d^2}{dt^2}\widetilde V_q(A_t)\Big|_{t=0}
				=4q((q-2)\int_0^1(1+s^2)^{\frac{q-4}{2}}\,ds
				-2^{\frac{q-2}{2}}).
			\end{aligned}
		\end{equation}
		
Let
\[F_q(t)=\widetilde V_q(A_t)-r_t^q\widetilde V_q(A_0), \quad \forall \ t \in (-1,1).
		\]
Since $
			\widetilde V_q(A_0)=4\int_0^1(1+s^2)^{\frac{q-2}{2}}\,ds,$	$\frac{d}{dt}r_t^q\big|_{t=0}=0,$ and $\frac{d^2}{dt^2}r_t^q\big|_{t=0}=-q$, by \eqref{logFprimeFsecond}, we have
		\[
		F_q'(0)=0, \quad \text{and}\quad F_q''(0)=4q(q-2)\int_0^1(1-s^2)(1+s^2)^{\frac{q-4}{2}}\,ds>0.
		\]
		Indeed,  from the  identity
		\[
		2^{\frac{q-2}{2}}
		=\int_0^1 \frac{d}{ds}\big(s(1+s^2)^{\frac{q-2}{2}}\big)\,ds
		=\int_0^1(1+s^2)^{\frac{q-2}{2}}\,ds
		+(q-2)\int_0^1s^2(1+s^2)^{\frac{q-4}{2}}\,ds,
		\]
		we have
		\[
        \begin{aligned}
            F_q''(0)&= \frac{d}{dt}\widetilde V_q(A_t)\Big|_{t=0} +q \widetilde  V_q(A_0) 
            \\&=4q\big((q-2)\int_0^1(1+s^2)^{\frac{q-4}{2}}\,ds-2^{\frac{q-2}{2}}\big)+4q\int_0^1(1+s^2)^{\frac{q-2}{2}}\,ds
            \\
            &=4q(q-2)\int_0^1(1-s^2)(1+s^2)^{\frac{q-4}{2}}\,ds>0.
        \end{aligned}
		\]

		Consequently, for sufficiently small $t\ne0$,
		\[
		\widetilde V_q(A_t)^{\frac{1}{2}}\widetilde V_q(A_{-t})^{\frac{1}{2}}= \widetilde V_q(A_t)>r_t^q\widetilde V_q(A_0)=\widetilde V_q(r_tA_0)=\widetilde V_q\big([(h_{A_t}h_{A_{-t}})^{\frac{1}{2}}]\big).
		\]
		
		\noindent
		\textbf{Step 2: The case $n\ge3$.}
		
		Assume  \(q>n\). For \(x\in \mathbb R^n\), write
		$
		x=(u,v)\in\mathbb R^2\times\mathbb R^{n-2}.$
		Since \(q-n+2>2\), by the arguments in Step 1, for sufficiently small \(t\ne0\), we have
		\begin{equation}\label{logthinmainnew}
			\int_{A_t}|u|^{q-n}\,du
			>
			r_t^{q-n+2}\int_{A_0}|u|^{q-n}\,du.
		\end{equation}
		
		For \(\varepsilon>0\) and \(t\in(-1,1)\), define a family of parallelotopes in $\mathbb{R}^n$
		\[
		K_{t,\varepsilon}
		=
		A_t\times[-\varepsilon,\varepsilon]^{n-2}
		\subset\mathbb R^n.
		\]
		Then
		\[
		\widetilde V_{q}(K_{t,\varepsilon})
		=
		\widetilde V_{q}(K_{-t,\varepsilon}), \quad \text{and} \quad \frac{K_{t,\varepsilon}+K_{-t,\varepsilon}}2
		=
		A_0\times[-\varepsilon,\varepsilon]^{n-2}
		=
		K_{0,\varepsilon}.
		\]
		
		For \(t\in(-1,1)\), define
		\[
		S_{t,\varepsilon}=r_tA_0\times[-\varepsilon,\varepsilon]^{n-2}.
		\]
		For $z=(z_1,z_2,\ldots,z_n)\in\mathbb R^n$, let
		\[
		a=|z_1|,
		\quad b=|z_2|,
		\quad c=\varepsilon\sum_{i=3}^n|z_i|.
		\]
		Then
		\[
		h_{K_{t,\varepsilon}}(z)=(1+t)a+(1-t)b+c,
		\quad
		h_{S_{t,\varepsilon}}(z)=r_t(a+b)+c,
		\]
	and
		\[
		h_{K_{t,\varepsilon}}(z)h_{K_{-t,\varepsilon}}(z)-h_{S_{t,\varepsilon}}(z)^2
		=4t^2ab+2(1-r_t)c(a+b)\ge0.
		\]
		So, $[(h_{K_{t,\varepsilon}}h_{K_{-t,\varepsilon}})^{\frac{1}{2}}]\supset S_{t,\varepsilon}$. Since the equality holds at all coordinate directions $\pm e_i$, $i=1,\ldots,n$, every point of $[(h_{K_{t,\varepsilon}}h_{K_{-t,\varepsilon}})^{\frac{1}{2}}]$ lies in $S_{t,\varepsilon}$. Hence,
		$[(h_{K_{t,\varepsilon}}h_{K_{-t,\varepsilon}})^{\frac{1}{2}}]=S_{t,\varepsilon}.$

		By formula \eqref{VQ},
		\[
		\begin{aligned}
			\widetilde V_q(K_{t,\varepsilon})
			&=\frac qn\int_{A_t}\int_{[-\varepsilon,\varepsilon]^{n-2}}(|u|^2+|v|^2)^{\frac{q-n}{2}}\,dv\,du\\
			&=\frac qn\varepsilon^{n-2}\int_{A_t}\int_{[-1,1]^{n-2}}(|u|^2+\varepsilon^2|w|^2)^{\frac{q-n}{2}}\,dw\,du.
		\end{aligned}
		\]
		The dominated convergence theorem gives that for \(t\in (-1,1)\),
		\[
		\widetilde V_q(K_{t,\varepsilon})
		=
		\frac{q}{n}(2\varepsilon)^{n-2}\int_{A_t}|u|^{q-n}\,du
		+
		o(\varepsilon^{n-2}), \quad \text{as}\ \varepsilon\to 0.
		\]
		
		Similarly, we have
		\begin{equation*}
			\widetilde V_q(S_{t,\varepsilon})
			=
			\frac qn(2\varepsilon)^{n-2}r_t^{q-n+2}\int_{A_0}|u|^{q-n}\,du+o(\varepsilon^{n-2}),
			\quad \text{as}\ \varepsilon\to0.
		\end{equation*}
		Therefore, by \eqref{logthinmainnew}, for sufficiently small $\varepsilon>0$ we have
		\begin{equation*}
			\widetilde V_q(K_{t,\varepsilon})^{\frac{1}{2}}\widetilde V_q(K_{-t,\varepsilon})^{\frac{1}{2}}=\widetilde V_q(K_{t,\varepsilon})>
			\widetilde V_q(S_{t,\varepsilon})=\widetilde V_q\big([(h_{K_{t,\varepsilon}}h_{K_{-t,\varepsilon}})^{\frac{1}{2}}]\big).
		\end{equation*}
		This completes the proof.
	\end{proof}
	
	To prove Theorem \ref{min}, the following  lemma is needed.
	
	\begin{lemma}\label{smooth-existence}
		Let \(q>n\),  \(S\in\mathcal K_e^n\cap C^2_+\) and
		\(
		\mu=\widetilde C_q(S,\cdot).
		\)
		Then the functional
		\[
		\Phi_\mu(Q)
		=
		-\frac1{|\mu|}\int_{\mathbb{S}^{n-1}}\log h_Q\,d\mu+\frac{1}{q}\log \frac{\widetilde{V}_q(Q)}{\omega_n}
		\]
		attains its maximum on \(\{Q:
		\widetilde V_q(Q)=|\mu|,\ Q\in\mathcal K_e^n\}\).
	\end{lemma}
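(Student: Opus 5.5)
Since $\Phi_\mu$ is invariant under dilation ($\widetilde V_q(rQ)=r^q\widetilde V_q(Q)$ and $\int\log h_{rQ}\,d\mu=\int\log h_Q\,d\mu+|\mu|\log r$), the constraint $\widetilde V_q(Q)=|\mu|$ only normalizes scale. On the constraint set, $\Phi_\mu(Q)=-\frac1{|\mu|}\int\log h_Q\,d\mu+\frac1q\log\frac{|\mu|}{\omega_n}$. Maximizing $\Phi_\mu$ is therefore the same as minimizing $J(Q)=\int_{\mathbb S^{n-1}}\log h_Q\,d\mu$ over $\{Q\in\mathcal K_e^n:\widetilde V_q(Q)=|\mu|\}$. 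The supremum is finite from below because $S$ itself is admissible, since $|\mu|=\widetilde V_q(S)$.

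Take a maximizing sequence $Q_i$. The key fact is that $\mu=\widetilde C_q(S,\cdot)$ has a positive continuous density on $\mathbb S^{n-1}$, by the $C^2_+$ formula: the density is $\frac1n h_S|\nabla h_S|^{q-n}\det(h_{ij}+h_S\delta_{ij})$, and $|\nabla h_S|\ge h_S>0$, so it is bounded below by some $c_0>0$.

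\textbf{Step 1: uniform bounds.}
\begin{enumerate}
\item \emph{Upper bound.} Let $R_i=\max h_{Q_i}$, attained at $v_i$. By symmetry $Q_i\supset[-R_iv_i,R_iv_i]$, so $h_{Q_i}(v)\ge R_i|v\cdot v_i|$. Hence
\[J(Q_i)\ge|\mu|\log R_i+\int\log|v\cdot v_i|\,d\mu(v).\]
Because $\mu$ has a bounded density, $\int\log|v\cdot v_i|\,d\mu\ge -C$ uniformly in $v_i$ (the singularity $\log|v\cdot v_i|$ is integrable). Since $J(Q_i)\le J(S)+1$ eventually, $R_i$ is bounded.
\item \emph{Lower bound.} The $Q_i$ cannot degenerate. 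If they had inradius tending to $0$ while staying bounded, then $\widetilde V_q(Q_i)=\frac qn\int_{Q_i}|x|^{q-n}dx\le C\,V_n(Q_i)\to0$, using $q>n$ so the weight is bounded on bounded sets. This contradicts $\widetilde V_q(Q_i)=|\mu|$.
\end{enumerate}

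\textbf{Step 2: passage to the limit.} By Blaschke selection, a subsequence converges to a compact convex origin-symmetric $Q_0$. By Step 1, $Q_0$ has nonempty interior, and its symmetry puts the origin in the interior. So $Q_0\in\mathcal K_e^n$. Continuity of $\widetilde V_q$ under Hausdorff convergence, which holds by uniform convergence of radial functions when the origin is interior, gives $\widetilde V_q(Q_0)=|\mu|$. Uniform convergence of $h_{Q_i}\to h_{Q_0}>0$ gives $J(Q_i)\to J(Q_0)$. Thus $Q_0$ is a maximizer.

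\textbf{Main obstacle.} The delicate point is Step 1. For $q>n$ the volume-type argument only works in one direction, so it is the non-collapse statement that must be handled carefully. Collapse with a bounded diameter is excluded by comparing $\widetilde V_q$ with $V_n$, using boundedness of $|x|^{q-n}$ on the relevant ball. The uniform integrability estimate for $\int\log|v\cdot v_i|\,d\mu$ also needs to be justified. Here the density bound $\mu\le C\,du$ coming from $S\in C^2_+$ is exactly what is used; for a general even measure this step could fail.
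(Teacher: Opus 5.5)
Your proposal is correct and follows essentially the same route as the paper: minimize $J(Q)=\int\log h_Q\,d\mu$ on the constraint set, bound the circumradius through $h_Q(v)\ge R(Q)|v\cdot e|$ together with the density bound $d\mu\le C\,du$, rule out collapse via $\widetilde V_q(Q)\le \frac qn R^{q-n}V_n(Q)$ (using $q>n$), and conclude with Blaschke selection. The only differences are harmless: you obtain finiteness of $\inf J$ from the compactness limit rather than from the paper's a priori bound $R(Q)\ge(|\mu|/\omega_n)^{1/q}$ (and your explicit $J(S)+1$ replaces the paper's undefined constant $M$), and the lower density bound $c_0$ you mention is never actually used.
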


	\begin{proof}
		We divide the proof into three steps.
		
		\noindent
		\textbf{Step 1:} Prove that the functional has a uniform bound.
		
		Since \(S\in C^2_+\), the measure \(\mu=\widetilde C_q(S,\cdot)\) has a positive
		continuous density
		\[
		d\mu(u)	= \frac1n h_S(u)
		|\nabla h_S(u)|^{q-n}
		\det(h_{ij}(u)+h_S(u)\delta_{ij})\,du .
		\]
		So,
		\(d\mu\le C\,du\)
		for some \(C<\infty\) and
		\[
		\gamma_\mu:=
		\inf_{e\in \mathbb{S}^{n-1}}
		\int_{\mathbb{S}^{n-1}}\log|\langle u,e\rangle|\,d\mu(u)\ge C\int_{\mathbb{S}^{n-1}}\log|\langle u,e\rangle|\,du  >-\infty .
		\]
	
		Let
		\[
		J(Q)=\int_{\mathbb{S}^{n-1}}\log h_Q\,d\mu .	\]
		Then it suffices to minimize the functional  $J$ over  the set \(\{Q:
		\widetilde V_q(Q)=|\mu|,\ Q\in\mathcal K_e^n\}\). 
		
		Fix \( Q\in\mathcal K_e^n\) with \(\widetilde V_q(Q)=|\mu|\). Let
		$R(Q)=\max_{x\in Q}|x|.$
		We have \(\rho_Q(u)\le R(Q)\) for all $ u \in \mathbb{S}^{n-1}$. Hence,
		\[|\mu|=\widetilde V_q(Q)=
		\frac1n\int_{\mathbb{S}^{n-1}}\rho_Q(u)^q\,du
		\le
		\omega_n R(Q)^q,\]
		and therefore
		\[
		R(Q)\ge r_0 := (\frac{|\mu|}{\omega_n})^{\frac{1}{q}}.
		\]
		
		Choose \(x=R(Q)e\in Q\), \(e\in \mathbb{S}^{n-1}\). Since $Q$ is origin-symmetric, it follows  that
		for all  $u\in \mathbb{S}^{n-1}$,
		\(
		h_Q(u)\ge R(Q)|\langle u,e\rangle|.
		\)
		Hence, 
		\[J(Q)=\int_{\mathbb{S}^{n-1}}\log h_Q\,d\mu
		\ge
		|\mu|\log R(Q)+\gamma_\mu
		\ge
		|\mu|\log r_0+\gamma_\mu,\]
	and	therefore, \[\inf_{\{Q:
			\widetilde V_q(Q)=|\mu|,\ Q\in\mathcal K_e^n\}} J(Q)>-\infty.\]
		
		\noindent
		\textbf{Step 2:} Show that a minimizing sequence of $J$ is uniformly bounded.
		
		Let \(\{Q_j\} \subset\mathcal K_e^n\) be a minimizing sequence of $J$. Since for each $j,$
 \[ J(Q_j)\ge |\mu|\log R(Q_j)+\gamma_\mu, \ \] for all $j$ we have 
		\[ \log R(Q_j)\le \frac{J(Q_j)-\gamma_\mu}{|\mu|}\le \frac{M-\gamma_\mu}{|\mu|},\]
		which implies that \(\{Q_j\}\) is uniformly bounded.
		
		\noindent
		\textbf{Step 3:} Prove the existence of the maximizer of $\Phi_\mu$.
		
		By the Blaschke selection theorem, there  exists an origin-symmetric compact convex set $Q_0$ and, after passing to a subsequence, still denoted by \(\{Q_j\}\), such that
		$Q_{j}\to Q_0$ as $j \to +\infty,$
		with respect to the Hausdorff metric.
		
		We claim that \(Q_0\) has non-empty interior. Otherwise, \(V_n(Q_0)=0\), and the continuity of volume under Hausdorff convergence gives \(V_n(Q_j)\to0\). Choose \(R>0\) such that \(Q_j\subset RB^n\) for all \(j\). Since \(q>n\), \eqref{VQ} gives
		\[
		\widetilde V_q(Q_j)
		=\frac qn\int_{Q_j}|x|^{q-n}\,dx
		\le \frac qn R^{q-n}V_n(Q_j)\to0,
		\]
		which contradicts that $\widetilde V_q(Q_{j})=|\mu|$. Thus \(Q_0\in\mathcal K_e^n\).
		
		Since \(h_{Q_{j}}\to h_{Q_0}\) uniformly, \(h_{Q_0}>0\), and \(\widetilde V_q\) is
		continuous, it follows that
		\[
		\widetilde V_q(Q_0)=|\mu|
		\quad \text{and} \quad
		J(Q_0)=\lim_{j\to +\infty}J(Q_{j}).
		\]
		Thus \(Q_0\) minimizes \(J\), and equivalently \(Q_0\) maximizes
		\(\Phi_\mu\).
	\end{proof}
	
	\begin{proof}[Proof of Theorem \ref{min}]
		We prove the following slightly stronger statement: for every \(q>n\),
		there exist \(K,L\in\mathcal K_e^n\) such that
		\(
		\widetilde C_q(K,\cdot)=\widetilde C_q(L,\cdot),
		\) but
		\[
		\Phi_{\widetilde C_q(K,\cdot)}(K)\ne
		\Phi_{\widetilde C_q(K,\cdot)}(L).
		\]
		This immediately yields that $K\neq L$. 
		
		We argue by contradiction. Assume there exists \(q>n\) such that whenever \(K,L\in\mathcal K_e^n\) satisfy
		\(
		\widetilde C_q(K,\cdot)=\widetilde C_q(L,\cdot),
		\)
		one has $\Phi_{\widetilde{C}_q(K, \cdot)}(K)= \Phi_{\widetilde{C}_q(K, \cdot)}(L).$ If so, 
		this will result in the log Brunn--Minkowski inequality
		\[
		\widetilde V_q\big([(h_Kh_L)^{\frac{1}{2}}]\big)
		\ge
		\widetilde V_q(K)^{\frac{1}{2}}\widetilde V_q(L)^{\frac{1}{2}}
		\] for all \(K,L\in\mathcal K_e^n\), and contradicts Theorem \ref{logmain}.
		
		We divide the proof into two steps.
		
		\noindent
		\textbf{Step 1:} Prove the dual log Minkowski inequality under the assumption.
		
		Let \(q>n\), \(S\in\mathcal K_e^n\cap C^2_+\), \(K\in\mathcal K_e^n\), and
		\(\mu=\widetilde C_q(S,\cdot).\) By Lemma~\ref{smooth-existence}, the functional
		\(\Phi_\mu\) attains its maximum on
		$\{Q:\widetilde V_q(Q)=|\mu|,\ Q\in\mathcal K_e^n\}.$
		Let \(Q_0\) be such a maximizer. By Lemma \ref{HLYZ-5.1}, we have
		$\widetilde C_q(Q_0,\cdot)=\mu=\widetilde C_q(S,\cdot).$
		
		By the definition of \(\mu\) and the assumption on  uniqueness of the value of \(\Phi\), we have
		\[
		\Phi_\mu(Q_0)=\Phi_\mu(S).
		\]
		By the maximality of \(Q_0\) for \(\Phi_\mu\), for all \(Q\in\mathcal K_e^n\) with \(\widetilde V_q(Q)=|\mu|\), 
		\[
		\Phi_\mu(Q)\le \Phi_\mu(Q_0) = \Phi_\mu(S).
		\]
		
		Set \(\lambda=(\frac{|\mu|}{\widetilde V_q(K)})^{\frac{1}{q}}\). Then, $\widetilde V_q(\lambda K)=\lambda^q \widetilde V_q(K) =|\mu|$ and \(\lambda K\in \mathcal{K}^n_e\).
		Taking \(Q=\lambda K\) in the above inequality, we have
		\[
		-\frac1{|\mu|}\int_{\mathbb{S}^{n-1}}\log (\lambda h_K)\,d\mu
		+
		\frac1q\log\frac{\lambda^q\widetilde V_q(K)}{\omega_n}
		\le
		-\frac1{|\mu|}\int_{\mathbb{S}^{n-1}}\log h_S\,d\mu
		+
		\frac1q\log\frac{\widetilde V_q(S)}{\omega_n}.
		\]
		That is,
		\begin{equation}\label{M1}
			\log\frac{\overline{V_q}(K)}{\overline{V_q}(S)}
			\le
			\frac1{|\mu|}
			\int_{\mathbb{S}^{n-1}}\log\frac{h_K}{h_S}\,d\mu,
		\end{equation}
		where
		\[
		\overline{V_q}(M)=(\frac{\widetilde V_q(M)}{\omega_n})^{\frac{1}{q}},\quad \text{for all} \ M \in\mathcal K_e^n.
		\]
		
		Since $\mathcal K_e^n\cap C^2_+$ is dense in $\mathcal K_e^n$, the weak continuity of $\widetilde C_q$ (Lemma \ref{weak}) and the continuity of $\widetilde V_q$ allow us to pass to the limit in \eqref{M1}. Since \(S\) and \(K\) are arbitrary, \eqref{M1} holds for every $K, S\in\mathcal K_e^n$, with $\mu=\widetilde C_q(S,\cdot)$.
		
		\noindent
		\textbf{Step 2:} From the dual log Minkowski inequality \eqref{M1} to \eqref{logBMq}.
		
		Let $K,L\in\mathcal K_e^n$. For $0<\lambda<1$, let 
		\[
		g=h_K^{1-\lambda}h_L^\lambda,\quad S=[g],\quad \text{and}\ \mu=\widetilde C_q(S,\cdot).
		\]
		By \eqref{M1}, we have
		\[
		\log\frac{\overline{V_q}(K)}{\overline{V_q}(S)}
		\le
		\frac1{|\mu|}\int_{\mathbb S^{n-1}}\log\frac{h_K}{h_S}\,d\mu \quad
		\text{and}\quad \log\frac{\overline{V_q}(L)}{\overline{V_q}(S)}
		\le
		\frac1{|\mu|}\int_{\mathbb S^{n-1}}\log\frac{h_L}{h_S}\,d\mu.
		\]
		Multiplying the first inequality by $1-\lambda$ and the second  by $\lambda$, we obtain
		\[
		\log\frac{\overline{V_q}(K)^{1-\lambda}\overline{V_q}(L)^\lambda}{\overline{V_q}(S)}
		\le
		\frac1{|\mu|}\int_{\mathbb S^{n-1}}\log\frac{g}{h_S}\,d\mu.
		\]
		
		We claim that the right-hand side of the above inequality is zero. Indeed, let
		\[
		\varphi=\log\frac{g}{h_S}\ge0,
		\quad
		h_s=h_Se^{s\varphi}=h_S^{1-s}g^s,
		\quad 0\le s\le1.
		\]
		Since $h_S\le h_s\le g$ and $[h_S]=[g]=S$, it follows that $[h_s]=S$ for all $s\in[0,1]$. Hence $\log\overline{V_q}([h_s])$ is constant in $s$. Applying \eqref{HLYZ45} at $s=0$ gives
		\[
		0=\frac1{\widetilde V_q(S)}
		\int_{\mathbb S^{n-1}}\varphi\,d\widetilde C_q(S,\cdot)
		=\frac1{|\mu|}\int_{\mathbb S^{n-1}}\log\frac{g}{h_S}\,d\mu.
		\]
		Therefore,
		\[
		\overline{V_q}([h_K^{1-\lambda}h_L^\lambda])=\overline{V_q}(S)
		\ge
		\overline{V_q}(K)^{1-\lambda}\overline{V_q}(L)^\lambda.
		\]
		Taking $\lambda=\frac12$, this contradicts Theorem \ref{logmain}, and completes the proof.
	\end{proof}
	
	\begin{remark}\label{remark}
		
		The bodies \(K\) and \(L\) in Theorem \ref{min} can be chosen from
		\(\mathcal K^n_e\cap C^\infty_+\). Thus there exists a smooth, positive, even function $f$ on $\mathbb{S}^{n-1}$ such that the following Monge-Amp\`ere equation 
		\[
		h(u) |\nabla h(u)|^{q-n} \det(h_{ij}(u) + h(u) \delta_{ij}) =f
		\]
		has \emph{different} classical solutions in  $C^\infty_+$.
		
		Indeed, it suffices to derive a contradiction from the assumption that there exists \(q>n\) such that whenever \(K,L\in\mathcal K^n_e\cap C^\infty_+\) satisfy
		\(
		\widetilde C_q(K,\cdot)=\widetilde C_q(L,\cdot),
		\)
		one has $\Phi_{\widetilde{C}_q(K, \cdot)}(K)= \Phi_{\widetilde{C}_q(K, \cdot)}(L).$
		
		Let \(S\in\mathcal K_e^n\cap C^\infty_+\) and \(\mu=\widetilde C_q(S,\cdot)\). Then \(\mu\) has a smooth, positive, even density on \(\mathbb S^{n-1}\). By Lemma~\ref{smooth-existence}, there exists \(Q_0\in\mathcal K_e^n\) which is a maximizer of \(\Phi_\mu\) on \(\{Q:\widetilde V_q(Q)=|\mu|,\ Q\in\mathcal K_e^n\}\). By Lemma~\ref{HLYZ-5.1},
		$
		\widetilde C_q(Q_0,\cdot)=\mu.$
		Theorem~\ref{BF-regularity} gives \(Q_0\in\mathcal K_e^n\cap C^\infty_+\). The assumption on  uniqueness of the value of \(\Phi_\mu\) in \(\mathcal K_e^n\cap C^\infty_+\) gives \(\Phi_\mu(Q_0)=\Phi_\mu(S)\). Hence the derivation of \eqref{M1} remains valid for every smooth \(S\). By smooth approximation, the weak continuity of \(\widetilde C_q\) and the continuity of $\widetilde V_q$, we again obtain \eqref{M1} for every \(S\in\mathcal K_e^n\). Step 2 in the proof of Theorem \ref{min} then gives \eqref{logBMq}, contradicting Theorem \ref{logmain}.	
	\end{remark}
	
	\vskip3pt {\bf Conflict of Interest}: We declare that we have no conflict of interest.
	
	{\bf Data Availability}: Not applicable.

\end{document}